\definecolor{ao(english)}{rgb}{0.13, 0.55, 0.13}
\newtheorem{theorem}{Theorem}
\newtheorem{conjecture}{Conjecture}
\newtheorem{corollary}{Corollary}
\newtheorem{lemma}{Lemma}
\numberwithin{equation}{section}
\title[Arithmetic properties of partition functions introduced by Pushpa and Vasuki]{Arithmetic properties of partition functions introduced by Pushpa and Vasuki}
\author[H. Nath]{Hemjyoti Nath}
\address[H. Nath]{Department of Mathematics, University of Florida, P.O. Box 118105, Gainesville, FL 32611-8105, USA}
\email{h.nath@ufl.edu}
\author[M. P. Saikia]{Manjil P. Saikia}
\address[M. P. Saikia]{Mathematical and Physical Sciences division, School of Arts and Sciences, Ahmedabad University, Ahmedabad 380009, Gujarat, India}
\email{manjil@saikia.in}
\keywords{integer partitions, restricted integer partitions, partition congruences, RaduRK}
\subjclass[2020]{11P81, 11P83, 05A17.}
\begin{document}

\begin{abstract}
    In this short note, we prove several infinite family of congruences for some restricted partitions introduced by Pushpa and Vasuki (2022) (thereby, also proving a conjecture of Dasappa et. al. (2023)). We also prove some isolated congruences which seem to have been missed by earlier authors. Our proof techniques uses both elementary means as well as the theory of modular forms.
\end{abstract}

\maketitle

\section{Introduction}

In a recent paper, Pushpa and Vasuki \cite{PushpaVasuki} proved Eisenstein series identities of level $5$ of weight $2$ due to Ramanujan and some new identities for level $7$. In the course of their investigations, they introduced seven restricted color partition functions, which are the objects of study in this short note. A partition of an integer $n$ is a non-increasing sequence $\lambda =(\lambda_1, \lambda_2, \ldots, \lambda_k)$ such that $\sum\limits_{i=1}^k\lambda_i=n$. For instance the $5$ partitions of $4$ are 
\[
(4), (3,1), (2,2), (2,1,1), \text{ and } (1,1,1,1).
\]

Partitions have been studied since at least the time of Euler, who gave their generating function
\[
\sum_{n\geq 0}p(n)q^n=\frac{1}{\prod_{i\geq 1}(1-q^i)}=\frac{1}{(q;q)_\infty}=\frac{1}{f_1},
\]
where $p(n)$ is the number of partitions of $n$ and we use the notation
\[
(a;q)_\infty=\prod_{i\geq 0}(1-aq^i) \quad \text{and} \quad f_k:=(q^k;q^k)_\infty.
\] The generating functions of the seven classes of partitions introduced by Pushpa and Vasuki \cite{PushpaVasuki} are as follows
\begin{align}
    \sum_{n\geq 0}P^\ast(n)q^n&=f_1^4f_5^4,\label{gf-p}\\
    \sum_{n\geq 0}M(n)q^n&=\frac{f_2^5f_5^5}{f_1f_{10}},\label{gf-m}\\
    \sum_{n\geq 0}T^\ast(n)q^n&=\frac{f_1^5f_{10}^5}{f_2f_5},\label{gf-t}\\
    \sum_{n\geq 0}A(n)q^n&=\frac{f_2^6f_7^6}{f_1^2},\label{gf-a}\\
    \sum_{n\geq 0}B(n)q^n&=\frac{f_1^6f_{14}^4}{f_2^2f_7^2},\label{gf-b}\\
    \sum_{n\geq 0}K(n)q^n&=f_1^2f_2^2f_7^2f_{14}^2,\label{gf-k}\\
    \sum_{n\geq 0}L(n)q^n&=\frac{f_1^5f_7^5}{f_2f_{14}}.\label{gf-l}
\end{align}

Pushpa and Vasuki \cite{PushpaVasuki} proved some isolated congruences satisfied by these functions. In a follow-up work
Dasappa \textit{et. al.} \cite{DasappaChannabasavayyaKeerthana} proived several more congruences for these classes of functions and they also gave some infinite families of congruences. In addition, they gave the following conjecture.
\begin{conjecture}\cite[Conjecture 7.1]{DasappaChannabasavayyaKeerthana}\label{conjd}
    For all $n\geq 0$ and $\alpha\geq 1$, we have
    \[
    K(7^\alpha n+7^\alpha -2)\equiv 0 \pmod{7^\alpha}.
    \]
\end{conjecture}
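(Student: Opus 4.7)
The plan is to recast the claim as a statement about the Atkin $U_{7}$ operator applied to a single eta quotient, and then induct on $\alpha$. From the definition of Dedekind's $\eta$,
\[
F \;:=\; \eta(z)^{2}\,\eta(2z)^{2}\,\eta(7z)^{2}\,\eta(14z)^{2} \;=\; q^{2}\sum_{n\geq 0}K(n)\,q^{n},
\]
a weight-$4$ cusp form on $\Gamma_{0}(14)$ with trivial character. Setting $L_{\alpha}(q):=\sum_{n\geq 0} K(7^{\alpha}n+7^{\alpha}-2)\,q^{n}$ and letting $U_{7}$ denote the operator $\sum a_{n}q^{n}\mapsto\sum a_{7n}q^{n}$, a short computation gives $U_{7}^{\alpha}F = q\,L_{\alpha}(q)$; in particular, Conjecture~\ref{conjd} is equivalent to $q L_{\alpha}(q)\in 7^{\alpha}\mathbb{Z}[[q]]$, and the induction relation $q L_{\alpha}=U_{7}(q L_{\alpha-1})$ is built in.

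For the base case $\alpha=1$, I would run Smoot's implementation of the Radu--Kolberg algorithm (the \texttt{RaduRK} package mentioned in the keywords) on $F$ to obtain an explicit identity
\[
q\,L_{1}(q) \;=\; 7\sum_{g\in\mathcal{B}}c_{g}\,g(q),
\]
for a finite collection $\mathcal{B}$ of eta quotients on $\Gamma_{0}(14)$ with integers $c_{g}$. This both verifies $K(7n+5)\equiv 0\pmod 7$ and, crucially, identifies the finite $\mathbb{Z}$-module $M:=\mathrm{span}_{\mathbb{Z}}\mathcal{B}$ in which the induction will live.

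The induction step rests on establishing a $U_{7}$-closure property for $M$: namely, $U_{7}(g)\in 7M$ for every $g\in\mathcal{B}$. Granting this, if $q L_{\alpha-1}\in 7^{\alpha-1}M$, then
\[
q L_{\alpha} \;=\; U_{7}(q L_{\alpha-1}) \;\in\; 7^{\alpha-1}\,U_{7}(M) \;\subseteq\; 7^{\alpha}M \;\subseteq\; 7^{\alpha}\mathbb{Z}[[q]],
\]
which is Conjecture~\ref{conjd} at level $\alpha$.

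The main obstacle is producing the $U_{7}$-closure matrix with every entry divisible by $7$. Existence of some finite-rank, $U_{7}$-stable $\mathbb{Z}_{7}$-lattice containing these series is assured in principle by the general theory of $7$-adic modular forms (Atkin, Serre), but the closure matrix must be exhibited explicitly: one computes $U_{7}(g)$ for each $g\in\mathcal{B}$, re-expresses it in the basis $\mathcal{B}$, and checks $7$-divisibility entry by entry. This is a finite, modest-dimensional calculation of precisely the type that RaduRK is designed to discharge. Once the closure matrix is in hand, the induction is purely formal.
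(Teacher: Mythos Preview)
Your approach is correct and is essentially the paper's: run \texttt{RaduRK} for the base case and induct via $U_7$. In fact the situation is simpler than you anticipate: the algorithm returns the single identity $\sum_{n\geq 0}K(7n+5)q^n=-7q\,f_1^2f_2^2f_7^2f_{14}^2$, i.e.\ $U_7F=-7F$, so your basis $\mathcal{B}$ is $\{F\}$, the ``closure matrix'' is the $1\times 1$ matrix $[-7]$, and the induction $U_7^\alpha F=(-7)^\alpha F$ follows immediately with no further computation.
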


\noindent Motivated by the work of Dasappa \textit{et. al.} \cite{DasappaChannabasavayyaKeerthana} and the above conjecture, we prove several infinite family of congruences for the partitions introduced by Pushpa and Vasuki \cite{PushpaVasuki}. We also prove some isolated congruences which seem to have been missed by earlier authors. Our proof techniques uses both elementary means as well as the theory of modular forms. Using a mixture of algorithmic and elementary techniques we give a proof of Conjecture \ref{conjd} as well.

Our first result is the following.
\begin{theorem}\label{thm 16n+7}
    For all $n\geq 0$, we have
    \begin{align}
        P^\ast(2n+1)&\equiv 0 \pmod 4,\label{p2n+1}\\
        P^\ast(4n+3)&\equiv 0 \pmod 8, \label{p4n+3}\\
        P^\ast(16n+7)&=0.\label{p16n+7}
    \end{align}
\end{theorem}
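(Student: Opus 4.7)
The plan is to extract all three statements from the observation that the normalized generating series $F(z) := \eta(z)^4\eta(5z)^4 = q\, f_1^4 f_5^4 = \sum_{n \geq 1} a(n) q^n$, where $a(n) = P^{\ast}(n-1)$, is a Hecke eigenform of weight $4$. Standard eta-quotient criteria place $F$ in $S_4(\Gamma_0(5))$ with trivial character, and the dimension formula gives $\dim S_4(\Gamma_0(5)) = 1$, so $F$ is the unique normalized newform there and its Fourier coefficients obey the usual Hecke relations.

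Reading off $a(1) = 1$ and $a(2) = -4$ from the first few terms of the $q$-expansion, the weight-$4$ Hecke recursion
\[
a(2^{k+1}) = a(2)\, a(2^k) - 2^{3} a(2^{k-1})
\]
gives $a(4) = 8$ and $a(8) = a(2) a(4) - 8 a(2) = 0$. A one-line induction on the same recursion then shows $4 \mid a(2^k)$ for all $k \geq 1$ and $8 \mid a(2^k)$ for all $k \geq 2$. By multiplicativity of the Hecke coefficients, $a(2^k m) = a(2^k)\, a(m)$ for odd $m$, and writing $n+1 = 2^v m$ with $m$ odd yields
\[
P^{\ast}(2n+1) = a(2^{v+1}) a(m) \equiv 0 \pmod 4, \qquad P^{\ast}(4n+3) = a(2^{v+2}) a(m) \equiv 0 \pmod 8,
\]
while
\[
P^{\ast}(16n+7) = a(8(2n+1)) = a(8)\, a(2n+1) = 0
\]
is immediate from $a(8) = 0$.

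The main obstacle is justifying that $F$ is a Hecke eigenform in the first place; this reduces to bookkeeping with Ligozat/Martin criteria for the cusp behavior of $\eta$-quotients on $\Gamma_0(5)$ together with the one-dimensionality of $S_4(\Gamma_0(5))$, both of which are standard. A more hands-on alternative, closer in spirit to the ``elementary'' techniques advertised in the abstract, would dispatch part 1 via the two-line congruence $f_k^4 \equiv f_{2k}^2 \pmod 4$ (giving $f_1^4 f_5^4 \equiv f_2^2 f_{10}^2 \pmod 4$, a series in $q^2$), and verify parts 2 and 3 through iterated $2$-dissections of $f_1^4 f_5^4$, with the final identity checked algorithmically through the \textsc{RaduRK} package mentioned in the introduction.
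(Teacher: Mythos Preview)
Your argument is correct and takes a genuinely different route from the paper. The paper proceeds by repeated elementary $2$-dissections: it first expands $f_1^4f_5^4$ via the identity $f_1^4=\dfrac{f_4^{10}}{f_2^2f_8^4}-4q\dfrac{f_2^2f_8^4}{f_4^2}$ (and its $q\mapsto q^5$ variant) to extract \eqref{p2n+1}, then feeds in $2$-dissections of $f_5/f_1$ and $f_1/f_5$ to get \eqref{p4n+3}, and finally invokes four further classical theta identities (for $f_1f_5^3$, $f_1^3f_5$, etc.) so that after one more dissection the even part collapses identically to zero, giving \eqref{p16n+7}. Your approach instead exploits that $\eta(z)^4\eta(5z)^4$ spans the one-dimensional space $S_4(\Gamma_0(5))$ and hence is a normalized newform; multiplicativity together with the weight-$4$ recursion at $p=2$ (where $a(2)=-4$, $a(4)=8$, $a(8)=0$) then dispatches all three claims in a couple of lines. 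The trade-off is the expected one: the paper's proof is entirely self-contained at the level of $q$-series manipulations, whereas yours imports standard but nontrivial machinery (Ligozat's criteria, the dimension formula, newform theory). On the other hand, your method is far more efficient and in fact immediately yields the paper's later Theorem~\ref{P} as well, since the characteristic roots $-2\pm 2i$ of the recursion satisfy $(-2\pm 2i)^4=-64$, giving $a(16m)=-64\,a(m)$ for all $m\geq 1$.
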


\noindent Theorem \ref{thm 16n+7} is proved in Section \ref{secthm16}. We note that Pushpa and Vasuki \cite[Eq. (5.32)]{PushpaVasuki} had also proved \eqref{p2n+1}, but the other two results do not appear in their work.

\begin{theorem}\label{P}
For all $n\geq 0$, we have
    \[
    P^\ast(16n+15) = -64 \cdot P^\ast(n).
    \]
\end{theorem}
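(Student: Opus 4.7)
The identity to be proved is equivalent to the generating function statement
\begin{equation*}
\sum_{n\ge 0} P^\ast(16n+15)\, q^n \;=\; -64\, f_1^4 f_5^4,
\end{equation*}
or, keeping the original variable, to the assertion that the $16n+15$ subseries of $f_1^4 f_5^4$ equals $-64\, q^{15} f_{16}^4 f_{80}^4$. The plan is to extract this subseries by an iterated $2$-dissection of $f_1^4 f_5^4$ and identify the result in closed form.

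First I would establish a $2$-dissection $f_1^4 f_5^4 = A_0(q^2) + q\, A_1(q^2)$ with $A_0, A_1$ explicit eta-quotients, arising from Ramanujan-type theta function identities and modular equations of level~$5$. Since $16n+15$ is odd, only $q A_1(q^2)$ carries the relevant coefficients. I would then dissect $A_1$ again to isolate the $4n+3$ terms, once more for the $8n+7$ terms, and a final time for the $16n+15$ terms. The end of this ladder is a single eta-quotient $B(q)$ satisfying
\begin{equation*}
\sum_{n\ge 0} P^\ast(16n+15)\, q^{16n+15} \;=\; q^{15}\, B(q^{16}),
\end{equation*}
and the theorem reduces to proving the eta-quotient identity $B(q) = -64\, f_1^4 f_5^4$.

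The main obstacle is producing the correct dissection identities at each rung of the ladder: these will involve eta-quotients of progressively higher level (up to~$80$) and use the same kind of cross-dissection machinery needed to establish Theorem~\ref{thm 16n+7}, which isolates the sibling residue class $16n+7$ modulo~$16$. I expect much of that machinery to be reusable here, since the two residue classes differ only by which branch one retains at the very last dissection step. A conceptually cleaner alternative is to observe that $q\, f_1^4 f_5^4 = \eta(\tau)^4 \eta(5\tau)^4$ is a cusp form in $S_4(\Gamma_0(5))$, a space whose dimension is~$1$; in that setting the asserted identity is equivalent to the statement that the appropriate $U_{16}$-type sectioning operator acts on this one-dimensional space by the scalar $-64$, which reduces the proof to a finite coefficient comparison of the kind handled by the Radu--RK algorithm referred to in the keywords.
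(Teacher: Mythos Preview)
Your plan is viable, and both routes you sketch would reach the goal, but the paper organizes the argument around a single identity you do not isolate. Rather than iterating the explicit high-level dissection identities used for Theorem~\ref{thm 16n+7}, the paper first proves (Lemma~\ref{Lemma 1}) that with $\lambda(q):=qf_1^4f_5^4$ one has
\[
H_2(\lambda(q)) \;=\; -4\lambda(q^2) - 8\lambda(q^4),
\]
derived from a degree-$5$ modular equation of Jacobi in Berndt's $q$-product form. The even-index subseries of $\lambda$ therefore stays in the two-dimensional span $\langle \lambda(q^2),\lambda(q^4)\rangle$, and iterating this one relation four times is a $2\times 2$ linear-algebra exercise that yields $\sum_{n\ge 0}P^\ast(16n+15)q^{n+1}=-64\lambda(q)$ directly, with $P^\ast(16n+7)=0$ falling out at the $8n+7$ step. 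Your first route arrives at the same place but through a considerably heavier computation; the closed two-term recursion replaces the growing tower of level-$40$ and level-$80$ eta-quotients entirely.

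Your second, modular-forms route is in fact the same identity in disguise: $\eta(\tau)^4\eta(5\tau)^4$ spans $S_4(\Gamma_0(5))$ and has Hecke eigenvalue $a_2=-4$, so $T_2 f=-4f$ rewrites via $T_2=U_2+2^{3}V_2$ precisely as Lemma~\ref{Lemma 1}. One technical caveat: $U_{16}$ does not a~priori preserve $S_4(\Gamma_0(5))$ since $2\nmid 5$, so the clean ``scalar on a one-dimensional space'' picture needs the intermediate step of working in the two-dimensional oldspace $\langle f, V_2 f\rangle\subset S_4(\Gamma_0(10))$, on which $U_2$ acts by $\left(\begin{smallmatrix}-4&1\\-8&0\end{smallmatrix}\right)$ and hence $U_2^4=-64\cdot I$. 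With that adjustment your outline is correct and essentially recovers the paper's argument in Hecke-theoretic language.
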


\noindent Theorem \ref{P} is proved in Section \ref{secP}. Here, we prove the following consequence of Theorem \ref{P}.

\begin{corollary}\label{P0}
    For all $n\geq 0$ and $\alpha\geq 1$, we have
    \[
    P^\ast(2^\alpha n+2^\alpha -1)\equiv 0 \pmod{2^{\alpha+1}}.
    \]
\end{corollary}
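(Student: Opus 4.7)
The plan is to induct on $\alpha$ in steps of size four, driven by Theorem \ref{P}, and to take Theorem \ref{thm 16n+7} as the source of base cases. The key algebraic observation is the identity
\[
2^\alpha n + 2^\alpha - 1 \;=\; 16\bigl(2^{\alpha-4}n + 2^{\alpha-4} - 1\bigr) + 15,
\]
valid for every $\alpha \geq 4$, which, upon substitution into Theorem \ref{P}, yields the recursion
\[
P^\ast\bigl(2^\alpha n + 2^\alpha - 1\bigr) \;=\; -64\, P^\ast\bigl(2^{\alpha-4}n + 2^{\alpha-4} - 1\bigr).
\]

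First I would dispose of the four base cases $\alpha \in \{1,2,3,4\}$. The cases $\alpha = 1$ and $\alpha = 2$ are exactly \eqref{p2n+1} and \eqref{p4n+3} from Theorem \ref{thm 16n+7}, while $\alpha = 4$ follows directly from Theorem \ref{P}, since $-64\, P^\ast(n)$ is already divisible by $2^{6}$, hence by $2^{5}$. The case $\alpha = 3$ is the only one requiring an extra maneuver: writing $n = 2m$ gives $8n+7 = 16m+7$, which vanishes by \eqref{p16n+7}, whereas writing $n = 2m+1$ gives $8n+7 = 16m+15$, and Theorem \ref{P} makes this equal to $-64\, P^\ast(m)$, divisible by $64$. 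Either way, $P^\ast(8n+7) \equiv 0 \pmod{16}$.

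For the inductive step with $\alpha \geq 5$, combining the recursion above with the induction hypothesis $P^\ast(2^{\alpha-4}n + 2^{\alpha-4} - 1) \equiv 0 \pmod{2^{\alpha-3}}$ yields divisibility by $2^{6} \cdot 2^{\alpha-3} = 2^{\alpha+3}$, which is two powers of $2$ stronger than the desired $2^{\alpha+1}$. The argument therefore closes with a comfortable margin.

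I do not anticipate any genuine obstacle here; the entire content is already bottled up in Theorems \ref{thm 16n+7} and \ref{P}. The only mildly delicate point is the parity split needed for the base case $\alpha = 3$, since Theorem \ref{P} by itself does not reach that residue class.
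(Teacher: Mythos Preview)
Your proof is correct and follows essentially the same approach as the paper: both induct in steps of four via Theorem~\ref{P}, with the small cases supplied by Theorem~\ref{thm 16n+7}. The only difference is at $\alpha = 3$: the paper invokes Corollary~\ref{cor 8n+7} (a byproduct of the proof of Theorem~\ref{P}, namely equation~\eqref{p8n+7}), whereas your parity split on $n$ is a neat self-contained alternative that uses only the \emph{statements} of Theorems~\ref{thm 16n+7} and~\ref{P}.
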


\begin{proof}
    The cases $\alpha=1, 2$ are proved in Theorem \ref{thm 16n+7} above, while the case $\alpha=3$ is proved in Corollary \ref{cor 8n+7} later. The result now follows via induction on $\alpha$ and an application of Theorem \ref{P} once we notice that, for all $\alpha\geq 3$ we have
    \[
2^{\alpha+1}n+2^{\alpha+1}-1=16(2^{\alpha-3}n+2^{\alpha-3}-1)+15.
    \]
\end{proof}

\begin{theorem}\label{Theorem M}
For all $n\geq 0$ and $\alpha\geq 1$, we have
    \begin{equation}
        \sum_{n\geq 0}M(5^{\alpha}n+5^{\alpha}-1)q^n=5^{\alpha}\Psi_{\alpha},\label{gf-m1}
    \end{equation}
where
\begin{equation*}
    \Psi_{\alpha} = 
    \begin{cases}
        q\dfrac{f_1^5 f_{10}^5}{f_2 f_5}, & \text{if } \alpha \text{ is odd}, \\[10pt]
        \dfrac{f_2^5 f_5^5}{f_1 f_{10}}, & \text{if } \alpha \text{ is even}.
    \end{cases}
\end{equation*}
\end{theorem}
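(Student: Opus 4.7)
The plan is to proceed by induction on $\alpha$, reducing the entire family of identities to a pair of $5$-dissection identities for eta-quotients.

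Set $A(q) := \frac{f_2^5 f_5^5}{f_1 f_{10}}$ and $B(q) := q\,\frac{f_1^5 f_{10}^5}{f_2 f_5}$, so that by \eqref{gf-m} one has $\sum_{n\geq 0} M(n) q^n = A(q)$, while $\Psi_\alpha$ equals $A$ or $B$ according as $\alpha$ is even or odd. Introduce the coefficient-extraction operator
\[
\mathcal{U}(f)(q) := \sum_{n \geq 0} \bigl([q^{5n+4}] f(q)\bigr)\, q^n,
\]
and write $M_\alpha(q) := \sum_{n \geq 0} M(5^\alpha n + 5^\alpha - 1)\, q^n$. Since
\[
5^{\alpha+1} n + 5^{\alpha+1} - 1 \;=\; 5\bigl(5^\alpha n + 5^\alpha - 1\bigr) + 4,
\]
the relation $M_{\alpha+1} = \mathcal{U}(M_\alpha)$ follows at once. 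Hence the whole theorem reduces to establishing the pair of eta-quotient identities
\[
\mathcal{U}(A) \;=\; 5 B \qquad \text{and} \qquad \mathcal{U}(B) \;=\; 5 A.
\]

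Granting these two identities, the induction is straightforward. The base case $\alpha = 1$ reads $M_1 = \mathcal{U}(M_0) = \mathcal{U}(A) = 5B = 5\Psi_1$. For the inductive step, if $M_\alpha = 5^\alpha \Psi_\alpha$, then applying $\mathcal{U}$ and using the identity appropriate to the parity of $\alpha$ gives $M_{\alpha+1} = 5^\alpha\, \mathcal{U}(\Psi_\alpha) = 5^{\alpha+1}\, \Psi_{\alpha+1}$, because $\Psi_\alpha$ toggles between $A$ and $B$.

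The main obstacle is therefore the verification of the two displayed identities. Both $A$ and $B$ are, up to a fractional power of $q$, eta-quotients of weight $4$ on $\Gamma_0(10)$, and $\mathcal{U}$ is a twisted variant of the Atkin $U_5$-operator that preserves modularity at this level. Consequently the two sides of each identity lie in a common finite-dimensional space of modular forms, and each identity reduces to matching a bounded number of $q$-expansion coefficients. The most efficient route is to carry this out algorithmically via Radu's Ramanujan--Kolberg procedure (as implemented in the \texttt{RaduRK} package), which produces an explicit basis for the ambient space together with a sharp bound on how many terms one must compare. A fully elementary alternative is to substitute the classical Ramanujan $5$-dissections of $f_1, f_2, f_5, f_{10}$ directly into $A$ and $B$ and extract the $4 \pmod{5}$ subseries by hand; this works but is computationally much heavier, and given the modular-forms framework already in play it seems preferable to invoke the algorithmic approach.
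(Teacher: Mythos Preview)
Your reduction is exactly the paper's: induct on $\alpha$, reduce everything to the two identities $\mathcal{U}(A)=5B$ and $\mathcal{U}(B)=5A$, and toggle between them. The difference lies in how those two identities are actually established. The paper does \emph{not} invoke \texttt{RaduRK} here; instead it carries out the elementary $5$-dissection you describe as the heavier alternative, and shows it is in fact quite light. The key observation you are missing is the factorisation
\[
A(q)=\frac{f_2^5 f_5^5}{f_1 f_{10}}=\psi(q)\cdot f_2^3\cdot \frac{f_5^5}{f_{10}},
\qquad
q^{-1}B(q)=\frac{f_1^5 f_{10}^5}{f_2 f_5}=\varphi(-q)\cdot f_1^3\cdot \frac{f_{10}^5}{f_5},
\]
after which one only needs the classical $5$-dissections of $\psi(q)$, $\varphi(-q)$ and $f_1$ from Lemma~\ref{lem1} (the third factor is already a series in $q^5$). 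Extracting the $q^{5n+4}$ terms is then a short hand computation, not a heavy one. Your modular-forms/\texttt{RaduRK} route is of course also valid and has the advantage of being mechanical, but it imports more machinery than is needed; the paper's elementary execution is both shorter and self-contained.
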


\noindent An immediate consequence of the above result is the following corollary.
\begin{corollary}\label{cor:m0}
For all $n\geq 0$ and $\alpha\geq 1$, we have
    \[
    M(5^\alpha n+5^\alpha -1)\equiv 0 \pmod{5^{\alpha}}.
    \]
\end{corollary}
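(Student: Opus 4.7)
The plan is to treat Corollary \ref{cor:m0} as a direct coefficient-extraction consequence of Theorem \ref{Theorem M}, so essentially all the work is already absorbed into the proof of that theorem. First I would observe that the series $\Psi_\alpha$ on the right-hand side of \eqref{gf-m1} has non-negative integer coefficients for every $\alpha\geq 1$. Indeed, comparing with the generating function definitions \eqref{gf-m} and \eqref{gf-t}, we have
\[
\Psi_\alpha \;=\;
\begin{cases}
\displaystyle q\sum_{n\geq 0} T^\ast(n)\,q^n, & \alpha \text{ odd},\\[6pt]
\displaystyle \sum_{n\geq 0} M(n)\,q^n, & \alpha \text{ even},
\end{cases}
\]
so in particular $[q^n]\Psi_\alpha \in \mathbb{Z}_{\geq 0}$ for every $n$.

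Next I would simply equate coefficients of $q^n$ on both sides of \eqref{gf-m1}. This yields
\[
M(5^\alpha n + 5^\alpha - 1) \;=\; 5^\alpha \cdot [q^n]\Psi_\alpha,
\]
and since $[q^n]\Psi_\alpha$ is an integer, the right-hand side is divisible by $5^\alpha$. This gives the claimed congruence for every $n\geq 0$ and every $\alpha\geq 1$.

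The only real obstacle in the argument is Theorem \ref{Theorem M} itself; once that identity is in hand, the corollary is truly immediate and requires no further machinery (no modular forms, no $5$-dissection of any auxiliary series). For this reason I would keep the write-up to essentially the two sentences above, referring the reader to Theorem \ref{Theorem M} and the $q$-series identifications of $\Psi_\alpha$ with the generating functions of $T^\ast$ (times $q$) and $M$ to justify the integrality of the coefficients.
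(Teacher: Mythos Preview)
Your proposal is correct and matches the paper's approach exactly: the paper does not even write out a separate proof, simply noting that Corollary~\ref{cor:m0} is ``an immediate consequence'' of Theorem~\ref{Theorem M}. Your only superfluous remark is the non-negativity of the coefficients of $\Psi_\alpha$; integrality alone (which is clear since each $\Psi_\alpha$ is an integral eta-quotient with constant term $1$ in the denominator) suffices for the congruence.
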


\noindent Corollary \ref{cor:m0} generalizes a result of Pushpa and Vasuki \cite[Theorem 5.13]{PushpaVasuki}, where they prove the $\alpha=1$ case.

\begin{theorem}\label{Theorem T}
For all $n\geq 0$, we have
    \begin{equation}
        \sum_{n\geq 0}T^{\ast}(5n+3)q^n=5\sum_{n\geq 0}M(n)q^n.\label{gf-m2}
    \end{equation}
\end{theorem}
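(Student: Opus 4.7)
The plan is to prove the generating-function identity
\[
\sum_{n\geq 0} T^{*}(5n+3)\,q^n \;=\; 5\,\frac{f_2^{5}f_5^{5}}{f_1 f_{10}},
\]
which is equivalent to the pointwise relation $T^{*}(5n+3)=5M(n)$ for all $n\geq 0$. Writing $F(q)=\sum_{n\geq 0} T^{*}(n)q^n=\frac{f_1^5 f_{10}^5}{f_2 f_5}$, the task is to isolate the $q^{5n+3}$ slice of $F$ and identify it, up to the factor $5$, with the eta-quotient $\frac{f_2^5 f_5^5}{f_1 f_{10}}$ that generates $M(n)$.

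My primary route is via the theory of modular forms. Both $F(q)$ and $G(q):=\frac{f_2^5 f_5^5}{f_1 f_{10}}$ are weight-$4$ eta-quotients on $\Gamma_0(10)$ (with an appropriate multiplier system). The operator $U_{5}^{[3]}\colon \sum a_n q^n\mapsto \sum a_{5n+3}q^n$ sends holomorphic modular forms on $\Gamma_0(10)$ into a computable ambient space on an explicit congruence subgroup. I would invoke the RaduRK algorithm (consistent with the keywords of the paper), feeding it $F$, the progression $n\equiv 3\pmod 5$, and the target eta-quotient $5G$, to obtain and certify a candidate eta-quotient representation of $U_{5}^{[3]} F$. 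Sturm's bound for the ambient space then reduces the identity to matching finitely many initial $q$-coefficients of the two sides, a direct check once the bound is fixed.

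An elementary alternative is to substitute classical Ramanujan-type $5$-dissection formulas for $\frac{f_1^5}{f_5}$ and $\frac{f_{10}^5}{f_2}$ (in terms of $f_{25}$, $f_{50}$, and Rogers--Ramanujan-type factors) directly into $F(q)$, and then algebraically isolate the $q^{5n+3}$ component, combining the pieces back into a single eta-quotient via Jacobi's triple product. The principal obstacle in either approach is bookkeeping: in the modular route, verifying that $U_{5}^{[3]} F$ really lies in the anticipated space of holomorphic forms (so that Sturm applies) requires careful tracking of orders of vanishing at all cusps of the relevant congruence subgroup; in the elementary route, carrying the multi-stage $5$-dissection through to a clean identification with $5G$ is algebraically dense. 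In both cases, however, the final verification step is conceptually routine once the framework is in place.
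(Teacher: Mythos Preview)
Your elementary alternative is the paper's actual route, but the paper makes it a one-line computation by choosing a sharper factorization than the one you propose. Rather than trying to $5$-dissect $\tfrac{f_1^5}{f_5}$ and $\tfrac{f_{10}^5}{f_2}$ separately (the second of which has no clean classical dissection---$f_{10}^5$ is already a series in $q^5$, so you would in effect be dissecting $1/f_2$), the paper writes
\[
\frac{f_1^5 f_{10}^5}{f_2 f_5}\;=\;\frac{f_1^2}{f_2}\cdot f_1^3\cdot\frac{f_{10}^5}{f_5}.
\]
The third factor is already a power series in $q^5$ and requires no dissection whatsoever; for the first two one simply inserts the standard $5$-dissections of $\varphi(-q)=f_1^2/f_2$ and of $f_1$ (in terms of $f_{25}$ and the Rogers--Ramanujan continued fraction $R(q^5)$). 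Multiplying out and reading off the $q^{5n+3}$ component then collapses directly to $5\,\tfrac{f_2^5 f_5^5}{f_1 f_{10}}$. So the ``algebraically dense'' bookkeeping you anticipate disappears once the right grouping is chosen.

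Your primary modular-forms/RaduRK route would certainly certify the identity, but it is machinery heavier than this result needs: the $5$-dissection here is short enough to do entirely by hand, and the paper reserves RaduRK for the level-$7$ statements where no comparably tidy dissections are available.
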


\begin{corollary}\label{cor t0}
For all $n\geq 0$ and $\alpha\geq 1$, we have
    \[
    T^\ast(5^\alpha n+5^\alpha -2)\equiv 0 \pmod{5^{\alpha}}.
    \]
\end{corollary}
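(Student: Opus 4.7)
The plan is to deduce this corollary as an immediate consequence of Theorem \ref{Theorem T} combined with Corollary \ref{cor:m0}, via a single arithmetic re-indexing. The key observation is that the progression $5^\alpha n + 5^\alpha - 2$ lies inside the progression $5m+3$ handled by Theorem \ref{Theorem T}: writing
\[
5^{\alpha} n + 5^{\alpha} - 2 = 5\bigl(5^{\alpha-1} n + 5^{\alpha-1} - 1\bigr) + 3,
\]
we can set $m := 5^{\alpha-1} n + 5^{\alpha-1} - 1$ and invoke Theorem \ref{Theorem T}.

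First I would extract the coefficient identity $T^\ast(5m+3) = 5\, M(m)$ from \eqref{gf-m2}, valid for every $m\geq 0$. Substituting the value of $m$ above then gives
\[
T^\ast(5^{\alpha} n + 5^{\alpha} - 2) \;=\; 5\, M\!\bigl(5^{\alpha-1} n + 5^{\alpha-1} - 1\bigr).
\]
For $\alpha \geq 2$, Corollary \ref{cor:m0} applied with exponent $\alpha-1 \geq 1$ shows that $M(5^{\alpha-1} n + 5^{\alpha-1} - 1)\equiv 0 \pmod{5^{\alpha-1}}$, so the additional factor of $5$ on the right-hand side upgrades this congruence to modulo $5^{\alpha}$, as required.

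The base case $\alpha=1$ must be treated separately, since Corollary \ref{cor:m0} is trivial for exponent $0$; however, the identity $T^\ast(5n+3)=5M(n)$ from Theorem \ref{Theorem T} directly yields $T^\ast(5n+3)\equiv 0 \pmod 5$ by itself, so this case requires no further work. There is essentially no hard step here: once Theorem \ref{Theorem T} is in hand, the corollary is a one-line index shift followed by an application of the $M$-congruence, and the only thing to be careful about is lining up the exponents in the re-indexing correctly so that the factor of $5$ gained from Theorem \ref{Theorem T} exactly compensates for the drop from $5^\alpha$ to $5^{\alpha-1}$ in the hypothesis on $M$.
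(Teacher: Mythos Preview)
Your proposal is correct and follows exactly the same route as the paper: re-index $5^\alpha n + 5^\alpha - 2 = 5(5^{\alpha-1}n + 5^{\alpha-1}-1)+3$, apply Theorem~\ref{Theorem T} to pick up a factor of $5$, and then invoke Corollary~\ref{cor:m0} at level $\alpha-1$. The only difference is that you separate out the case $\alpha=1$ explicitly, whereas the paper absorbs it into the same line (since $M(n)\equiv 0\pmod{5^0}$ is vacuous).
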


\begin{proof}
This result follows easily from Theorem \ref{Theorem T} and Corollary \ref{cor:m0}, by observing the following
\[
T^\ast(5^\alpha n+5^\alpha -2)=T^\ast(5(5^{\alpha-1}n+5^{\alpha-1}-1)+3)=5\cdot M(5^{\alpha-1}n+5^{\alpha-1}-1) \equiv 0 \pmod {5\cdot 5^{\alpha-1}}.
\]
\end{proof}

\noindent Corollary \ref{cor t0} generalizes a result of Pushpa and Vasuki \cite[Theorem 5.14]{PushpaVasuki}, where they prove the $\alpha=1$ case. Theorems \ref{Theorem M} and \ref{Theorem T} are proved in Section \ref{secMT}.

\begin{theorem}\label{Theorem 9n, 10n}
For all $n\geq 0$, we have
    \begin{align}
    P^\ast\left( 2 \cdot 5^{\alpha}n + 2 \cdot 5^{\alpha} -1\right)&\equiv 0\pmod{2^2 \cdot 5^{\alpha}}.\label{P(10n+9)}
    \end{align}
\end{theorem}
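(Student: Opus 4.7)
The modulus $4 \cdot 5^{\alpha}$ has coprime parts, which I would handle separately. The factor of $4$ is immediate: since $2 \cdot 5^{\alpha} n + 2 \cdot 5^{\alpha} - 1$ is odd for every $n \geq 0$ and $\alpha \geq 1$, equation \eqref{p2n+1} directly yields
\[
P^{\ast}(2 \cdot 5^{\alpha} n + 2 \cdot 5^{\alpha} - 1) \equiv 0 \pmod 4.
\]

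The core of the proof is the $5^{\alpha}$ divisibility, which I would establish by induction on $\alpha$ following the strategy of Theorem~\ref{Theorem M}. The first step is to obtain an explicit eta-quotient form for $\sum_{n \geq 0} P^{\ast}(2n+1) q^{n}$ via a $2$-dissection of $f_{1}^{4} f_{5}^{4}$. Squaring the classical identity
\[
f_{1}^{2} \;=\; \frac{f_{2} f_{8}^{5}}{f_{4}^{2} f_{16}^{2}} \;-\; 2q \cdot \frac{f_{2} f_{16}^{2}}{f_{8}}
\]
(which follows from $\phi(-q) = \phi(q^{4}) - 2q\,\psi(q^{8})$ together with $\phi(-q) = f_{1}^{2}/f_{2}$) expresses $f_{1}^{4}$ as a three-term sum with a single odd-parity piece; applying the same formula with $q \mapsto q^{5}$ to $f_{5}^{4}$ and expanding the product, the odd-$q$-power terms of $f_{1}^{4} f_{5}^{4}$ assemble into a four-term sum with common factor $-4$, which by itself re-derives \eqref{p2n+1}. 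Replacing $q^{2} \mapsto q$ yields $\sum P^{\ast}(2n+1) q^{n} = -4 \cdot \Sigma(q)$ for an explicit four-term sum $\Sigma(q)$ of eta quotients.

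For the base case $\alpha = 1$, I would $5$-dissect each of the four eta quotients in $\Sigma(q)$, retain only the $m \equiv 4 \pmod{5}$ coefficients, and verify that the resulting expression for $\sum P^{\ast}(10n+9) q^{n}$ is an eta quotient divisible by $5$, so that the total factor becomes $20$. The individual $5$-dissections are handled systematically by the RaduRK package, or equivalently by Ramanujan's classical $5$-dissection identities. For the inductive step, assuming $\sum P^{\ast}(2 \cdot 5^{\alpha} n + 2 \cdot 5^{\alpha} - 1) q^{n} = 4 \cdot 5^{\alpha} \Phi_{\alpha}$ for an eta quotient $\Phi_{\alpha}$, I would apply the $U_{5}$ operator to pick out the next sub-progression and show the output is $4 \cdot 5^{\alpha+1} \Phi_{\alpha+1}$, with $\Phi_{\alpha+1}$ lying in the same finite-dimensional space of eta quotients; a two-cycle alternation of the $\Phi_{\alpha}$, parallel to the $\Psi_{\alpha}$ alternation in Theorem~\ref{Theorem M}, closes the induction.

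The main obstacle is the bookkeeping through this iterated $5$-dissection: unlike $M(n)$, whose generating function is a single eta quotient, $\sum P^{\ast}(2n+1) q^{n}$ is a genuine four-term sum of eta quotients, so every stage of the dissection is correspondingly more intricate. The RaduRK implementation is the most efficient tool to verify the identity at the base case and to identify the correct alternating $\Phi_{\alpha}$'s driving the induction.
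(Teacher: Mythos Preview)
Your decomposition into coprime moduli is fine, and the factor of $4$ via \eqref{p2n+1} is exactly right. But you have reversed the order of the two dissections relative to the paper, and that reversal is precisely what creates the ``main obstacle'' you describe. The paper does the $5$-dissection first and the $2$-dissection last: its key lemma is the self-similarity
\[
\sum_{n\geq 0}P^\ast(5^\alpha n+5^\alpha-1)q^n=(-5)^\alpha f_1^4 f_5^4,
\]
proved by induction using Lemma~\ref{lem1}\,(i). One application of the $5$-dissection sends $f_1^4 f_5^4$ to $-5\,f_1^4 f_5^4$, so the iteration is trivial and stays on a single eta quotient. With this lemma in hand, a single $2$-dissection of $f_1^4 f_5^4$ via \eqref{2-d f_1^4} immediately gives the extra factor $4$ and finishes the proof.

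By contrast, your route first $2$-dissects to $\sum P^\ast(2n+1)q^n$, which is a genuine sum of eta quotients involving $f_4,f_8,f_{20},\ldots$, and only then attempts to iterate the $5$-dissection. You do not actually establish the claimed two-cycle alternation of the $\Phi_\alpha$; you defer it to RaduRK. It is not a priori clear that $U_5$ acts on this multi-term sum in a way that closes up after two steps, so as written this is an assertion rather than a proof. The paper's ordering sidesteps the issue entirely: iterate on the self-similar single quotient $f_1^4f_5^4$, then dissect once at $2$.
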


\noindent Theorem \ref{Theorem 9n, 10n} is proved in Section \ref{secthm9n}.

\begin{theorem}\label{abradu}
    For all $n\geq 0$, we have
    \begin{align}
        A(7n+6)&\equiv 0 \pmod{7},\label{aradu}\\
        B(7n+4)&\equiv 0 \pmod{7}.\label{bradu}
    \end{align}
\end{theorem}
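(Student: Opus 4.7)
The plan is to prove both parts of Theorem \ref{abradu} by a uniform application of Radu's Ramanujan--Kolberg algorithm (\textsf{RaduRK}), which is designed precisely for congruences of eta-quotient coefficients along arithmetic progressions and reduces each such claim to a finite computation.

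For \eqref{aradu}, I would feed \textsf{RaduRK} the eta-quotient data $(r_1,r_2,r_7) = (-2,6,6)$ coming from \eqref{gf-a}, the arithmetic progression $t = 6 \pmod{M=7}$, and the prime modulus $\ell = 7$. After fixing a level $N$ that is an admissible common multiple of $1,2,7$ (so that the underlying eta-quotient is modular on $\Gamma_0(N)$), the algorithm outputs an identity expressing a suitably normalised version of $\sum_{n\ge 0} A(7n+6)\,q^n$ as an integral linear combination of eta-quotient modular forms on $\Gamma_0(N)$, together with an explicit bound $B$ reducing the congruence \eqref{aradu} to the verification that $A(7m+6)\equiv 0 \pmod 7$ for $0 \le m \le B$. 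The latter is a routine $q$-series check. For \eqref{bradu} the setup is entirely analogous, with eta-quotient exponents $(r_1,r_2,r_7,r_{14}) = (6,-2,-2,4)$ from \eqref{gf-b}, progression $t = 4 \pmod 7$, and $\ell = 7$.

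The main obstacle is operational rather than conceptual: \textsf{RaduRK} imposes several compatibility conditions on the input tuple $(N, M, (r_\delta), t)$, and one must choose $N$ large enough to satisfy the modularity requirements yet small enough that the basis of modular functions produced by the algorithm, and the resulting coefficient bound $B$, remain tractable. Once an admissible $N$ is identified --- typically by trying small common multiples of the denominators appearing in the input eta-quotient --- the final coefficient verification for both \eqref{aradu} and \eqref{bradu} completes essentially instantly, and the theorem follows.
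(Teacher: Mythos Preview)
Your plan is essentially the paper's own: both invoke Smoot's \textsf{RaduRK} implementation of Radu's algorithm on the eta-quotient data for $A$ and $B$ with $m=7$ and residues $6,4$ respectively, at level $N=14$. The only discrepancy is in how you describe the output: \textsf{RaduRK} does not hand back a Sturm-type bound $B$ followed by a separate coefficient check, but rather an explicit identity of the form $f_1(q)\sum_{n\ge0}A(7n+6)q^n=\sum_{g\in AB}p_g(t)\,g$ in which the common factor $7$ of the polynomials $p_g(t)$ is read off directly---so the ``routine $q$-series check'' you anticipate is subsumed in the algorithm itself rather than being a post-processing step.
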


\noindent Theorem \ref{abradu} is proved in Section \ref{proof:abradu}.

Our final theorem gives a proof of Conjecture \ref{conjd} as well as another infinite family of congruences.
\begin{theorem}\label{thmconjd}
            For all $n\geq 0$ and $\alpha\geq 1$, we have
\begin{equation}\label{eqconjd}
    K(7^\alpha n+7^\alpha -2)\equiv 0 \pmod{7^\alpha},
\end{equation}
and
\begin{equation}\label{eqconjdd}
        L(7^\alpha n+7^\alpha -1)\equiv 0 \pmod{7^\alpha}.
\end{equation}
\end{theorem}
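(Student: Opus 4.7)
The plan is to establish both congruences \eqref{eqconjd} and \eqref{eqconjdd} by induction on $\alpha$, following the template provided by Theorem \ref{Theorem M} (which treats a structurally analogous family for $M$ modulo $5^\alpha$). Both generating functions $f_1^2 f_2^2 f_7^2 f_{14}^2$ and $f_1^5 f_7^5/(f_2 f_{14})$ are weight-$4$ eta quotients on $\Gamma_0(14)$, so the same level governs both arguments, and I would treat them in parallel.

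For the base cases ($\alpha = 1$), namely $K(7n+5) \equiv 0 \pmod{7}$ and $L(7n+6) \equiv 0 \pmod{7}$, I would invoke the RaduRK algorithm on the respective eta quotients. This reduces each congruence to the verification of finitely many Fourier coefficients of an auxiliary modular form on $\Gamma_0(14)$ and is completely algorithmic.

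For the inductive step, the guiding observation is
\[
7^{\alpha+1} n + 7^{\alpha+1} - 2 = 7(7^\alpha n + 7^\alpha - 1) + 5, \qquad 7^{\alpha+1} n + 7^{\alpha+1} - 1 = 7(7^\alpha n + 7^\alpha - 1) + 6.
\]
So I would seek closed-form identities
\[
\sum_{n \geq 0} K(7^\alpha n + 7^\alpha - 2)\, q^n = 7^\alpha \Phi_\alpha^K(q), \qquad \sum_{n \geq 0} L(7^\alpha n + 7^\alpha - 1)\, q^n = 7^\alpha \Phi_\alpha^L(q),
\]
where each $\Phi_\alpha^K$ and $\Phi_\alpha^L$ is an eta quotient whose shape (likely alternating with the parity of $\alpha$, as $\Psi_\alpha$ does in Theorem \ref{Theorem M}) can be conjectured from small cases. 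The inductive step then amounts to a pair of $7$-dissection identities, provable by combining Ramanujan's septic theta-dissection formulas with standard eta-product manipulations.

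The main obstacle is twofold. First, one must identify the correct closed forms $\Phi_\alpha^K, \Phi_\alpha^L$; this is a matter of careful computation and pattern-matching against catalogues of level-$14$ eta quotients, and the pattern may be cleaner for one of $K, L$ than the other. Second, one must rigorously establish the $7$-dissection identities feeding the induction, which can be algebraically lengthy but uses standard tools. If a uniform closed form proves elusive, an alternative route is to verify additional base cases $\alpha = 2, 3, \dots$ directly via RaduRK and then reduce the higher-$\alpha$ cases to them through a step of the form $7^{\alpha+1}n + \cdots = 7^k\bigl(7^{\alpha+1-k}n + \cdots\bigr) + c$, in the spirit of the elementary inductive bookkeeping used in Corollary \ref{P0}.
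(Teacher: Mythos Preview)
Your plan is broadly on the right track (induction on $\alpha$ with a machine-verified base case), but it overcomplicates the inductive step and misses the key shortcut the paper exploits. The point is that RaduRK returns not merely the congruences $K(7n+5)\equiv 0$ and $L(7n+6)\equiv 0\pmod 7$, but the \emph{exact} identities
\[
\sum_{n\ge 0}K(7n+5)q^n=-7q\,f_1^2f_2^2f_7^2f_{14}^2=-7q\sum_{n\ge 0}K(n)q^n,\qquad
\sum_{n\ge 0}L(7n+6)q^n=-7\,\frac{f_1^5f_7^5}{f_2f_{14}}=-7\sum_{n\ge 0}L(n)q^n.
\]
These are \emph{self-replicating}: the $7$-dissected series is (up to a $q$-shift) a scalar multiple of the original generating function. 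Consequently there is no parity alternation in your $\Phi_\alpha$'s at all---one has $\Phi_\alpha^L=(-1)^\alpha f_1^5f_7^5/(f_2f_{14})$ and $\Phi_\alpha^K=(-1)^\alpha q\,f_1^2f_2^2f_7^2f_{14}^2$ for every $\alpha$---and the induction is a one-line iteration of the same identity, with no need for Ramanujan's septic dissections or any pattern-matching against level-$14$ catalogues. Your fallback (verify several $\alpha$ by RaduRK and bootstrap) would also work, but once you notice that the RaduRK output for $\alpha=1$ already hands you the full identity rather than just a congruence, the whole problem collapses.
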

\noindent Theorem \ref{thmconjd} is proved in Section \ref{sec:conjd}.

The rest of the paper is organized as follows: we begin with some preliminaries in Section \ref{sec:prelim}, then prove our results in Sections \ref{secthm16} -- \ref{sec:conjd}, and finally close the paper with some remarks in Section \ref{sec:conc}.

\section{Preliminaries}\label{sec:prelim}

\subsection{Elementary $q$-series identities}

Ramanujan's general theta function $f(a,b)$ is defined by 
\begin{align*}
	f(a,b) = \sum_{k=-\infty}^{\infty} a^{\frac{k(k+1)}{2}}b^{\frac{k(k-1)}{2}}, \quad |ab| < 1.
\end{align*}
Two special cases of $f(a,b)$ are
\begin{align}
	\varphi(q) &:= f(q,q) = \sum_{k=-\infty}^\infty q^{k^2}=1+2\sum_{n\geq 1}q^{n^2},\label{varphi}
\end{align}
and
\begin{align*}
    \psi(q):=f(q,q^3)=\sum_{k=0}^\infty q^{k(k+1)/2}.
\end{align*}

We will need the following identities in order to prove our theorems.

\begin{lemma}\label{lem1}
Let $$R(q)=\dfrac{(q;q^5)_{\infty}(q^4;q^5)_{\infty}}{(q^2;q^5)_{\infty}(q^3;q^5)_{\infty}} \quad \text{and} \quad R_j=R(q^j),$$ then
\begin{enumerate}[(i)]
    \item \( f_1 = f_{25}\left( \dfrac{1}{R(q^5)} -q -q^2R(q^5)\right), \label{eq:f1_def}\)
    \item[]\vspace{10pt}

    \item \( \psi(q)=\dfrac{f_2^2}{f_1}=f(q^{10},q^{15})+qf(q^5,q^{20})+q^3\psi(q^{25}), \label{eq:psi_q}\)
    \item[]\vspace{10pt}

    \item \( \varphi(-q) = \dfrac{f_1^2}{f_2}  = -2qf(-q^{15},-q^{35})+2q^4f(-q^{5},-q^{45})+ \varphi(-q^{25}). \label{eq:phi_neg_q}\)
\end{enumerate}
\end{lemma}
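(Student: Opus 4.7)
All three identities are classical $5$-dissections due to Ramanujan, and my plan is to handle them uniformly by bilateral theta-series manipulation.

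For (ii), I would write $\psi(q) = \tfrac{1}{2}\sum_{n \in \mathbb{Z}} q^{n(n+1)/2}$ using the symmetry $n \leftrightarrow -n-1$ of the triangular numbers, and split the index as $n = 5j+r$ with $r \in \{-2,-1,0,1,2\}$. Inside each residue class, the substitution $j \mapsto -j-1$ pairs the contribution from $r$ with that from $-r-1$, so the five classes collapse to three bilateral sums in $j$, each with exponent of the form $25 j^2/2 + c j/2 + d$ for small integers $c,d$. Matching this exponent against $(A+B)k^2/2 + (A-B)k/2$ from Ramanujan's theta function $f(q^A, q^B)$ then identifies the three summands as $f(q^{10},q^{15})$, $q f(q^5,q^{20})$ and $q^3 \psi(q^{25})$, which is exactly (ii). The proof of (iii) is essentially the same, starting from $\varphi(-q) = \sum_{k \in \mathbb{Z}} (-1)^k q^{k^2}$ and splitting $k \pmod 5$: the $r = 0$ class contributes $\varphi(-q^{25})$, and the pairs $r \leftrightarrow -r$ for $r = \pm 1, \pm 2$ (together with the sign $(-1)^{5j+r} = (-1)^{j+r}$) produce the $f(-q^{15},-q^{35})$ and $f(-q^{5},-q^{45})$ contributions with the required $q$-shifts $-2q$ and $2q^4$.

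Identity (i) is the main obstacle. Starting from Euler's pentagonal number theorem $f_1 = \sum_k (-1)^k q^{k(3k-1)/2}$ and writing $k = 5j+r$, the exponents $k(3k-1)/2$ land in only three residue classes modulo $5$, namely $\{0,1,2\}$; the class $r = 1$ collapses cleanly (after $j \mapsto -j$) to $-q f_{25}$, giving the middle term on the right-hand side. The hard step is to match the combined residue-$0$ sum (contributions of $r \in \{0,2\}$) and residue-$2$ sum (contributions of $r \in \{-1,-2\}$) with $f_{25}/R(q^5)$ and $-q^2 R(q^5) f_{25}$ respectively. For this I would invoke the Jacobi triple product, which gives the representation $R(q^5) = f(-q^5,-q^{20})/f(-q^{10},-q^{15})$, together with standard bilinear identities for $f(a,b)$ to rewrite $R(q^5)^{\pm 1} f_{25}$ as explicit bilateral theta sums, and then verify that exponents and signs agree with the two residue-class contributions. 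Since (i) is a famous identity of Ramanujan and (ii), (iii) are standard $5$-dissections, in the write-up I would simply cite these results (e.g.\ from Berndt's treatment of Ramanujan's notebooks, or Hirschhorn's \emph{The Power of~q}) rather than reproduce the dissection in full.
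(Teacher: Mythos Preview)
Your proposal is correct, and in fact your concluding plan---to simply cite Berndt's treatment of Ramanujan's notebooks and Hirschhorn's \emph{The Power of~$q$}---is exactly what the paper does: its entire proof of the lemma consists of the references ``\cite[Equation~8.1.1]{Hirschhorn17}'' for (i) and ``\cite[p.~262, Entry~10~(i)--(ii)]{BerndtIII}'' for (ii) and (iii). Your additional sketch of the residue-class dissections of $\psi(q)$, $\varphi(-q)$ and the pentagonal-number series is sound and goes beyond what the paper supplies, but the final write-up you describe coincides with the paper's approach.
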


\begin{proof}
    See \cite[Equation 8.1.1]{Hirschhorn17} for equation~\eqref{eq:f1_def},
\cite[p.~262, Entry 10~(i)]{BerndtIII} for equation~\eqref{eq:psi_q}, and 
\cite[p.~262, Entry 10~(ii)]{BerndtIII} for equation~\eqref{eq:phi_neg_q}.
\end{proof}

We also make use of the following identities.

\begin{lemma}
\begin{align}
    f_1^4&=\frac{f_4^{10}}{f_2^2f_8^4}-4q\frac{f_2^2f_8^4}{f_4^2},\label{2-d f_1^4}\\
    \frac{f_5}{f_1}&=\frac{f_8f_{20}^2}{f_2^2f_{40}}+q\frac{f_4^3f_{10}f_{40}}{f_2^3f_8f_{20}}, \label{2-d f_5/f_1}\\
    \frac{f_1}{f_5}&=\frac{f_2f_8f_{20}^3}{f_4f_{10}^3f_{40}}-q\frac{f_4^2f_{40}}{f_8f_{10}^2}, \label{2-d f_1/f_5}\\
    f_1f_5^3&=f_2^3f_{10}-q\frac{f_2^2f_{10}^2f_{20}}{f_4}+2q^2f_4f_{20}^3-2q^3\frac{f_4^4f_{10}f_{40}^2}{f_2f_8^2}, \label{2-d f_1f_5^3}\\
    f_1^3f_5 & =\frac{f_2^2f_4f_{10}^2}{f_{20}}+q\left(2f_4^3f_{20}-5f_2f_{10}^3\right)+2q^2 \frac{f_4^6f_{10}f_{40}^2}{f_2f_8^2f_{20}^2}. \label{2-d f_1^3f_5}
\end{align}
\end{lemma}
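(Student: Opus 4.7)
The plan is to derive each of the five $2$-dissection identities from classical theta function identities, all of which are catalogued in Hirschhorn's \emph{The Power of $q$}. Throughout, the main translations between Ramanujan's theta functions and eta-quotients are $\varphi(-q) = f_1^2/f_2$, $\psi(q) = f_2^2/f_1$, $\varphi(q^2) = f_4^5/(f_2^2 f_8^2)$, and $\psi(q^4) = f_8^2/f_4$.

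First, identity \eqref{2-d f_1^4} is the eta-quotient form of the classical Jacobi-type $2$-dissection
\[
\varphi(-q)^2 = \varphi(q^2)^2 - 4q\,\psi(q^4)^2,
\]
which itself follows by combining the standard splittings $\varphi(q)^2 + \varphi(-q)^2 = 2\varphi(q^2)^2$ and $\varphi(q)^2 - \varphi(-q)^2 = 8q\,\psi(q^4)^2$. Multiplying both sides of the theta identity by $f_2^2$ and substituting the translations above produces \eqref{2-d f_1^4} directly.

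Second, identities \eqref{2-d f_5/f_1} and \eqref{2-d f_1/f_5} are Ramanujan's $2$-dissections of the Rogers--Ramanujan-type ratios $f_5/f_1$ and $f_1/f_5$. My plan is to apply the Jacobi triple product to $\varphi(-q)\,\varphi(-q^5)$ (or equivalently to $f_1 f_5$), separate the resulting double sum according to the parity of the $q$-exponent, and then recognise each half as the claimed eta-quotient by running the Jacobi triple product in reverse. Dividing through by the appropriate eta-quotient factor (so as to isolate $f_5/f_1$ on one side, respectively $f_1/f_5$ on the other) yields the two claimed identities.

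Finally, for the four-term dissections \eqref{2-d f_1f_5^3} and \eqref{2-d f_1^3f_5}, my plan is to combine the previous three dissections multiplicatively. Writing $f_1^3 f_5 = f_1^4 \cdot (f_5/f_1)$ and multiplying \eqref{2-d f_1^4} by \eqref{2-d f_5/f_1} produces four product terms carrying $q$-parities $0,1,1,2$, which after standard eta-quotient simplification collect into the three terms (even, odd, $q^2$) of \eqref{2-d f_1^3f_5}. Analogously, $f_1 f_5^3 = f_1^4 \cdot (f_5/f_1)^3$, so expanding the cube of \eqref{2-d f_5/f_1} against \eqref{2-d f_1^4} and collecting the four $q$-parity classes produces \eqref{2-d f_1f_5^3}. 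The main obstacle is not conceptual but algebraic bookkeeping: the expanded products carry many factors $f_2, f_4, f_8, f_{10}, f_{20}, f_{40}$ that must be rearranged to match the target expressions, a step that is routine but lengthy and is most safely verified in a computer algebra system. Since all five identities appear in Hirschhorn's book, the most efficient written proof simply cites those references, which is presumably the route taken here.
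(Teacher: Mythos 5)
Your derivation of \eqref{2-d f_1^4} from $\varphi(-q)^2=\varphi(q^2)^2-4q\,\psi(q^4)^2$ is correct and standard. The paper itself, however, proves almost nothing: it cites \eqref{2-d f_1^4} to Baruah--Kaur, \eqref{2-d f_5/f_1} to Hirschhorn--Sellers, \eqref{2-d f_1f_5^3} to Berndt, and \eqref{2-d f_1^3f_5} to Naika et al., and then obtains \eqref{2-d f_1/f_5} from \eqref{2-d f_5/f_1} by the substitution $q\mapsto -q$ together with $(-q;-q)_\infty=f_2^3/(f_1f_4)$. That last, genuinely proved step is the one piece of the paper's argument your proposal does not contain, and it is cleaner than your plan for \eqref{2-d f_1/f_5}.

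There are two concrete gaps in your sketch. First, for \eqref{2-d f_5/f_1} you propose to $2$-dissect $f_1f_5$ (or $\varphi(-q)\varphi(-q^5)$) via the Jacobi triple product and then ``divide through'' to isolate $f_5/f_1$; but dissection does not commute with division unless the divisor is already a series in $q^2$, so knowing the even/odd parts of $f_1f_5$ tells you nothing directly about the even/odd parts of $f_5/f_1$. (Moreover the quadratic form $m^2+5n^2$ arising from $\varphi(-q)\varphi(-q^5)$ does not split off a product of two single theta functions after the parity change of variables, so even that first step is not routine.) Second, and more seriously, the claim that \eqref{2-d f_1f_5^3} and \eqref{2-d f_1^3f_5} follow from multiplying \eqref{2-d f_1^4} by \eqref{2-d f_5/f_1} (or its cube) up to ``algebraic bookkeeping'' is false as stated: the product expansions yield sums of eta-quotient monomials with coefficients drawn from $\{1,-4,\dots\}$, and these must be shown to equal the displayed terms --- for instance the odd part of \eqref{2-d f_1^3f_5} is $2f_4^3f_{20}-5f_2f_{10}^3$, whose coefficient $-5$ cannot arise from rearranging factors of the two monomials $\frac{f_4^{13}f_{10}f_{40}}{f_2^5f_8^5f_{20}}$ and $-4\frac{f_8^5f_{20}^2}{f_4^2f_{40}}$. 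Matching the pieces requires additional nontrivial theta-function identities, i.e.\ these are independent results (which is why the paper cites them separately), not corollaries of the first two dissections. Your closing remark that the written proof is most likely a citation to the literature does correctly anticipate the paper's actual approach.
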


\begin{proof}
    Equation \eqref{2-d f_1^4} is \cite[Eq. (2.4)]{BaruahKaur}. Equation \eqref{2-d f_5/f_1} is \cite[Theorem 2.1]{HirSel}. Equation \eqref{2-d f_1f_5^3} is \cite[p. 315]{BerndtIII}. Equation \eqref{2-d f_1^3f_5} can be found in \cite{Naika}.

Replacing $q$ by $-q$ in \eqref{2-d f_5/f_1} and using the fact that $(-q; -q)_\infty = \dfrac{f_2^3}{f_1 f_4}$, we obtain \eqref{2-d f_1/f_5}.

\end{proof}

\subsection{Radu's algorithm}\label{sec:rk}
To prove some of our results, we use Smoot's \cite{Smoot} implementation of an algorithm of Radu \cite{Radu} which we describe now. Radu's algorithm can be used to prove Ramanujan type congruences of the form stated in the previous section. The algorithm takes as an input the generating function
\[
\sum_{n\geq 0}a_r(n)q^n=\prod_{\delta|M}\prod_{n\geq 1}(1-q^{\delta n})^{r_\delta},
\]
and positive integers $m$ and $N$, with $M$ another positive integer and $(r_\delta)_{\delta|M}$ is a sequence indexed by the positive divisors $\delta$ of $M$. With this input, Radu's algorithm tries to produce a set $P_{m,j}(j)\subseteq \{0,1,\ldots, m-1\}$ which contains $j$ and is uniquely defined by $m, (r_\delta)_{\delta|M}$ and $j$. Then, it decides if there exists a sequence $(s_\delta)_{\delta |N}$ such that
\[
q^\alpha \prod_{\delta|M}\prod_{n\geq 1}(1-q^{\delta n})^{s_\delta} \cdot \prod_{j^\prime \in P_{m,j}(j)}\sum_{n\geq 0}a(mn+j^\prime)q^n,
\]
is a modular function with certain restrictions on its behaviour on the boundary of $\mathbb{H}$.

Smoot \cite{Smoot} implemented this algorithm in Mathematica and we use his \texttt{RaduRK} package which requires the software packaage \texttt{4ti2}. Documentation on how to intall and use these packages are available from Smoot \cite{Smoot}. We use this implemented \texttt{RaduRK} algorithm to prove Theorem \ref{abradu}.

It is natural to guess that $N=m$ (which corresponds to the congruence subgroup $\Gamma_0(N)$), but this is not always the case, although they are usually closely related to one another. The determination of the correct value of $N$ is an important problem for the usage of \texttt{RaduRK} and it depends on a criterion called the $\Delta^\ast$ criterion \cite[Definitions 34 and 35]{Radu}, which we do not explain here. It is easy to check the minimum $N$ which satisfies this criterion by running \texttt{minN[M, r, m, j]}. 

\section{Proof of Theorem \ref{thm 16n+7}}\label{secthm16}

From \eqref{gf-p}, we have
\begin{equation*}
    \sum_{n \geq 0} P^\ast(n) q^n = f_1^4 f_5^4.
\end{equation*}
Using \eqref{2-d f_1^4} and then replacing $q$ with $q^5$ in that equation, and together using both equations in the above expression, we obtain
\[
 \sum_{n \geq 0} P^\ast(n) q^n = \left(\frac{f_4^{10}}{f_2^2f_8^4}-4q\frac{f_2^2f_8^4}{f_4^2}\right)\left(\frac{f_{20}^{10}}{f_{10}^2f_{40}^4}-4q\frac{f_{10}^2f_{40}^4}{f_{20}^2}\right).
\]
In the above, extracting the terms involving the odd powers of $q$, we obtain
\begin{equation*}
    \sum_{n \geq 0} P^\ast(2n+1) q^n = -4 \left( \dfrac{f_1^2f_4^4f_{10}^{10}}{f_2^2f_5^2f_{20}^4} + q^2\dfrac{f_2^{10}f_5^2f_{20}^4}{f_1^2f_4^4f_{10}^2} \right).
\end{equation*}
This proves \eqref{p2n+1}.

We rewrite the above equation as
\[
    \sum_{n \geq 0} P^\ast(2n+1) q^n = -4 \left( \dfrac{f_4^4f_{10}^{10}}{f_2^2f_{20}^4}\cdot \left(\frac{f_1^2}{f_5^2}\right)^2 + q^2\dfrac{f_2^{10}f_{20}^4}{f_4^4f_{10}^2}\cdot \left(\frac{f_5^2}{f_1^2}\right)^2 \right).
\]
Using \eqref{2-d f_1/f_5} and \eqref{2-d f_5/f_1} together in the above expression, and then extracting the terms involving the odd powers of $q$, we obtain
\begin{equation*}
    \sum_{n \geq 0} P^\ast(4n+3) q^n = 8 \left( \dfrac{f_2^5f_5^5}{f_1f_{10}} -q \dfrac{f_1^5f_{10}^5}{f_2f_5} \right) .
\end{equation*}
This proves \eqref{p4n+3}.

We rewrite the above equation as
\[
    \sum_{n \geq 0} P^\ast(4n+3) q^n = 8 \left( \dfrac{f_2^5}{f_{10}}\cdot f_5^4\cdot \frac{f_5}{f_1} -q \dfrac{f_{10}^5}{f_2}\cdot f_1^4\cdot \frac{f_1}{f_5} \right) .
\]
Using the magnified version $q \to q^5$ in \eqref{2-d f_1^4}, and also using \eqref{2-d f_1/f_5} and \eqref{2-d f_5/f_1} all together in the above expression, then extracting the terms involving the odd powers of $q$, we obtain
\begin{equation*}
    \sum_{n \geq 0} P^\ast(8n+7) q^n = 8 \left( \dfrac{f_1^2f_2^3f_{10}^9}{f_4f_5^2f_{20}^3} - \dfrac{f_2^9f_5^2f_{10}^3}{f_1^2f_4^3f_{20}}-4qf_1f_4^3f_5^3f_{20} -4q^2f_1^3f_4f_5f_{20}^3 \right) .
\end{equation*}
We rewrite this as
\[
 \sum_{n \geq 0} P^\ast(8n+7) q^n = 8 \left( \dfrac{f_2^3f_{10}^9}{f_4f_{20}^3}\cdot \left(\frac{f_1}{f_5}\right)^2 - \dfrac{f_2^9f_{10}^3}{f_4^3f_{20}}\cdot \left(\frac{f_5}{f_1}\right)^2-4qf_4^3f_{20}\cdot f_1f_5^3 -4q^2f_4f_{20}^3 \cdot f_1^3f_5\right) .
\]
Using \eqref{2-d f_1/f_5}, \eqref{2-d f_5/f_1}, \eqref{2-d f_1f_5^3}, and \eqref{2-d f_1^3f_5} all together in the above expression, then extracting the terms involving the even powers of $q$, we obtain
\begin{equation*}
    \sum_{n \geq 0} P^\ast(16n+7) q^n = 0 .
\end{equation*}
This proves \eqref{p16n+7}.

\section{Proof of Theorem \ref{P}}\label{secP}

We first start with the following lemma. Here, we define the ``huffing" operator $H_k$ for a power series $\sum_{n\geq 0}P(n)q^n$ and a positive integer $k$ as follows
\[
H_k\left(\sum_{n\geq 0}P(n)q^n\right):=\sum_{n\geq 0}P(kn)q^{kn}.
\]

\begin{lemma}\label{Lemma 1}
If we define
    \begin{equation*}
        \lambda(q):=qf_1^4f_5^4, \quad \theta:=\frac{\lambda^3(q^2)}{\lambda(q)\lambda(q^4)(\lambda(q^2)+2\lambda(q^4))}, \quad\delta := \frac{\lambda^3(q^2)}{\lambda(q^4)(\lambda(q^2)+2\lambda(q^4))^2},
    \end{equation*}
then we obtain
\begin{equation}\label{el1}
    H_2( \lambda(q) ) = -4\lambda(q^2)-8\lambda(q^4).
\end{equation}

\end{lemma}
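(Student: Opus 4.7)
The plan is to 2-dissect $\lambda(q)=qf_1^4f_5^4$ using \eqref{2-d f_1^4}, read off the even-power part $H_2(\lambda(q))$, and then reduce the claimed identity to an elementary consequence of \eqref{2-d f_5/f_1}--\eqref{2-d f_1/f_5}. Concretely, I would apply \eqref{2-d f_1^4} to $f_1^4$ and also (after replacing $q$ by $q^5$) to $f_5^4$, multiply the two resulting binomials together, and multiply by $q$. The $q$-exponents that appear are $1,2,6,7$, so the even-power contribution is
\[
H_2(\lambda(q))= -4q^2\,\frac{f_2^2f_8^4f_{20}^{10}}{f_4^2f_{10}^2f_{40}^4} -4q^6\,\frac{f_4^{10}f_{10}^2f_{40}^4}{f_2^2f_8^4f_{20}^2}.
\]
(This is the formula for the odd part of $f_1^4f_5^4$ already obtained in the proof of Theorem~\ref{thm 16n+7}, rescaled by $q\mapsto q^2$ and multiplied by $q^2$.) Consequently \eqref{el1} is equivalent to the eta-quotient identity
\[
\frac{f_2^2f_8^4f_{20}^{10}}{f_4^2f_{10}^2f_{40}^4}+q^4\,\frac{f_4^{10}f_{10}^2f_{40}^4}{f_2^2f_8^4f_{20}^2}=f_2^4f_{10}^4+2q^2f_4^4f_{20}^4.
\]

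To establish the latter, I would introduce $x=\dfrac{f_2f_8^2f_{20}^3}{f_4^3f_{10}f_{40}^2}$, making the two summands on the left $x^2f_4^4f_{20}^4$ and $f_4^4f_{20}^4/x^2$ respectively. Using $x^2+q^4/x^2=(x-q^2/x)^2+2q^2$, the identity collapses to $(x-q^2/x)^2=f_2^4f_{10}^4/(f_4^4f_{20}^4)$. Matching constant terms fixes the sign, yielding
\[
x-\frac{q^2}{x}=\frac{f_2^2f_{10}^2}{f_4^2f_{20}^2},
\]
which, after clearing the $x$ in the denominator, is in turn equivalent to
\[
\frac{f_8^2f_{20}^5}{f_2f_4f_{10}^3f_{40}^2}-q^2\,\frac{f_4^5f_{40}^2}{f_2^3f_8^2f_{10}f_{20}}=1.
\]

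This final identity is immediate: multiplying \eqref{2-d f_5/f_1} by \eqref{2-d f_1/f_5}, the product of the left-hand sides is identically $(f_5/f_1)(f_1/f_5)=1$, while on the right the two mixed $q^1$ cross-terms cancel exactly, leaving precisely the displayed $q^0$ plus $q^2$ combination. The main obstacle is thus conceptual rather than computational: namely, spotting the completing-the-square reformulation in the second paragraph, which exposes the bulky-looking eta-quotient identity as a mere repackaging of the trivial relation $(f_5/f_1)\cdot(f_1/f_5)=1$.
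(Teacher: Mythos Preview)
Your argument is correct and complete. The $2$-dissection of $\lambda(q)$ via \eqref{2-d f_1^4} is carried out accurately, the reduction to the eta-quotient identity is right, the completing-the-square trick is valid (the sign choice is justified since both $x-q^2/x$ and $f_2^2f_{10}^2/(f_4^2f_{20}^2)$ have constant term $1$), and the final verification using \eqref{2-d f_5/f_1}$\times$\eqref{2-d f_1/f_5}$=1$ checks out term by term.

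Your route, however, is genuinely different from the paper's. The paper does not $2$-dissect $\lambda(q)$ directly; instead it invokes Jacobi's degree-$5$ modular equation, rewritten by Berndt as the $q$-product identity
\[
\frac{f_2^8f_{10}^8}{f_1^4f_5^4f_4^4f_{20}^4}-\frac{f_1^4f_5^4}{f_2^4f_{10}^4}=8q+16q^3\frac{f_4^4f_{20}^4}{f_2^4f_{10}^4},
\]
recasts it as $\delta/\theta=\theta-8$ in the auxiliary variables $\theta,\delta$ from the lemma statement, and then observes via a $q\mapsto -q$ symmetry ($R(q)=S(-q)$) that $H_2(\theta)=4$; applying $H_2$ to $\delta/\theta=\theta-8$ then gives $H_2(1/\theta)=-4/\delta$, which unwinds to \eqref{el1}. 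Your approach is more elementary and entirely self-contained within the paper's preliminary toolkit (Lemma~2), avoiding the external modular equation; it also explains why $\theta$ and $\delta$, though defined in the lemma, play no role in the conclusion. The paper's approach, by contrast, makes the identity a transparent consequence of a single classical modular equation and clarifies the structural role of $\theta,\delta$.
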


\begin{proof}
We recall the following modular equation of degree 5, originally due to Jacobi \cite{jacobi1,jacobi2}. Using the notation in Berndt \cite{Bruce}, if $\beta$ is of degree 5 over $\alpha$, then the identity  
\begin{equation*}
    (\alpha \beta)^{1/2} + \left\{(1 - \alpha)(1 - \beta)\right\}^{1/2} + 2 \left\{16\alpha \beta (1 - \alpha)(1 - \beta)\right\}^{1/6} = 1
\end{equation*}
holds. Furthermore, Berndt \cite[Equation 4.4]{Bruce} demonstrated that this modular equation can be reformulated as the following $q$-product identity
\begin{equation*}
    (-q; q^2)^4_{\infty} (-q^5; q^{10})^4_{\infty} - (q; q^2)^4_{\infty} (q^5; q^{10})^4_{\infty} = 8q + 16q^3 (-q^2; q^2)^4_{\infty} (-q^{10}; q^{10})^4_{\infty},
\end{equation*}
which can be rewritten as
\begin{equation}\label{Ber modular 1}
    \frac{f_2^8f_{10}^8}{f_1^4f_5^4f_4^4f_{20}^4}-\frac{f_1^4f_5^4}{f_2^4f_{10}^4}=8q+16q^3\frac{f_4^4f_{20}^4}{f_2^4f_{10}^4}.
\end{equation}
One may also express the above identity as
    \begin{equation*}
        \frac{f_1^4f_5^4}{q(f_2^4f_{10}^4+2q^2f_4^4f_{20}^4)} = \frac{f_2^{12}f_{10}^{12}}{qf_1^4f_5^4f_4^4f_{20}^4(f_2^4f_{10}^4+2q^2f_4^4f_{20}^4)}-8.
    \end{equation*}
In our notation, the above identity is equivalent to
\begin{equation}\label{theta^i 1}
    \frac{\delta}{\theta} = \theta-8.
\end{equation}

Let 
\begin{equation*}
    R(q):= \frac{f_2^8f_{10}^8}{f_1^4f_5^4f_4^4f_{20}^4}, \quad S(q):= \frac{f_1^4f_5^4}{f_2^4f_{10}^4},
\end{equation*}
it then follows readily that
\begin{equation*}
    R(q)=S(-q), \quad S(q)=R(-q).
\end{equation*}
Therefore, using \eqref{Ber modular 1}, we obtain
\begin{equation*}
    \frac{R(q)-S(q)}{2}=4q+8q^3\frac{f_4^4f_{20}^4}{f_2^4f_{10}^4}.
\end{equation*}
Moreover, using the fact that
\begin{equation*}
    R(q) = \frac{R(q)+S(q)}{2}+\frac{R(q)-S(q)}{2},
\end{equation*}
we obtain
\begin{equation*}
    \frac{f_2^8f_{10}^8}{f_1^4f_5^4f_4^4f_{20}^4} = \varepsilon(q^2) + 4q+8q^3\frac{f_4^4f_{20}^4}{f_2^4f_{10}^4}.
\end{equation*}
where $\varepsilon(q^2)$ is a function of $q^2$. The above equation can be rewritten as
\begin{equation}\label{e1}
    \frac{f_2^{12}f_{10}^{12}}{qf_1^4f_5^4f_4^4f_{20}^4(f_2^4f_{10}^4+2q^2f_4^4f_{20}^4)} = \frac{\varepsilon(q^2)f_2^4f_{10}^4}{q(f_2^4f_{10}^4+2q^2f_4^4f_{20}^4)} + 4.
\end{equation}
It is evident from \eqref{e1} that
\begin{equation}\label{e2}
    H_2(\theta) = 4.
\end{equation}
Using \eqref{e2} in \eqref{theta^i 1}, we obtain
\begin{equation*}
    H_2\left(\frac{1}{\theta}\right) = -\dfrac{4}{\delta}.
\end{equation*}
By substituting the expressions for $\theta$ and $\delta$ into the above equation and simplifying, we complete the proof of Lemma~\ref{Lemma 1}.

\end{proof}

We now prove Theorem \ref{P}. We use the notations of the previous lemma without commentary.
\begin{proof}[Proof of Theorem \ref{P}]
We have
\begin{equation}\label{e18}
     \sum_{n \geq 0} P^{\ast}(n) q^{n+1} = \lambda(q).
\end{equation}
Extracting the even powers of \( q \) from \eqref{e18}, we obtain
\begin{equation*}
    \sum_{n \geq 1} P^\ast(2n - 1) q^{2n} = H_2\!\left( \lambda(q) \right).
\end{equation*}
Using \eqref{el1}, we get
\begin{equation*}
    \sum_{n \geq 1} P^\ast(2n - 1) q^{2n} = -4\lambda(q^2) - 8\lambda(q^4).
\end{equation*}

Replacing \( q^2 \) by \( q \) and then \( n \) by \( n + 1 \) in the above identity, we obtain
\begin{equation}\label{e19}
    \sum_{n \geq 0} P^\ast(2n + 1) q^{n+1} = -4\lambda(q) - 8\lambda(q^2).
\end{equation}
Again, extracting the even powers of \( q \) from \eqref{e19}, we obtain
\begin{equation*}
    \sum_{n \geq 1} P^\ast(4n - 1) q^{2n} = -4 \cdot H_2\!\left( \lambda(q) \right) - 8\lambda(q^2).
\end{equation*}
Using \eqref{el1}, we get
\begin{equation*}
    \sum_{n \geq 1} P^\ast(4n - 1) q^{2n} = 8\lambda(q^2) + 32\lambda(q^4).
\end{equation*}

Replacing \( q^2 \) by \( q \) and then \( n \) by \( n + 1 \) in the above identity, we obtain
\begin{equation}\label{e20}
    \sum_{n \geq 0} P^\ast(4n + 3) q^{n+1} = 8\lambda(q) + 32\lambda(q^2).
\end{equation}
Extracting the even powers of $q$ from \eqref{e20}, we obtain
\[
\sum_{n \geq 1} P^\ast(8n -1) q^{2n} = 8\cdot H_2(\lambda(q)) + 32\lambda(q^2).
\]
Again, like before using \eqref{el1}, replacing $q^2$ by $q$ and then $n$ by $n+1$ in the above, we obtain
\begin{equation}\label{p8n+7}
    \sum_{n \geq 0} P^\ast(8n + 7) q^{n+1}  = -64\lambda(q^2).
\end{equation}
Once again, extracting the even powers of $q$ we arrive readily at
\begin{equation}\label{p16n+15}
    \sum_{n \geq 0} P^\ast(16n + 15) q^{n+1}  = -64\lambda(q).
\end{equation}
Equations \eqref{e18} and \eqref{p16n+15} proves the result.
\end{proof}

\begin{corollary}\label{cor 8n+7}
    For all $n\geq 0$, we have
    \begin{align*}
    P^\ast(8n + 7) &\equiv 0 \pmod{64},\\
    P^\ast(32n + 31) &\equiv 0 \pmod{256},\\
    P^\ast(64n + 63) &\equiv 0 \pmod{512}.
\end{align*}
\end{corollary}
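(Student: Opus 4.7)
The plan is to observe that Corollary \ref{cor 8n+7} follows essentially for free from identities already established inside the proof of Theorem \ref{P}, together with the mod $4$ and mod $8$ congruences from Theorem \ref{thm 16n+7}. No new generating-function manipulation or modular-form argument is required; the whole task is bookkeeping of powers of $2$.

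For the first congruence, I would invoke equation \eqref{p8n+7}, namely
\[
\sum_{n \geq 0} P^\ast(8n + 7) q^{n+1} = -64\lambda(q^2).
\]
Since $\lambda(q^2) = q^{2}f_2^{4}f_{10}^{4}$ has integer power-series coefficients, reading off the coefficient of $q^{n+1}$ on both sides immediately gives $P^\ast(8n+7) \equiv 0 \pmod{64}$.

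For the second and third congruences, I would apply Theorem \ref{P}, which says $P^\ast(16m+15) = -64 \cdot P^\ast(m)$, with the substitutions $m = 2n+1$ and $m = 4n+3$ respectively. The arithmetic $16(2n+1)+15 = 32n+31$ and $16(4n+3)+15 = 64n+63$ yields
\[
P^\ast(32n+31) = -64\, P^\ast(2n+1), \qquad P^\ast(64n+63) = -64\, P^\ast(4n+3).
\]
Combining these with \eqref{p2n+1} ($P^\ast(2n+1)\equiv 0 \pmod 4$) and \eqref{p4n+3} ($P^\ast(4n+3)\equiv 0 \pmod 8$) produces the divisibilities by $256$ and $512$ respectively.

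Since every ingredient is already in place, there is no real obstacle: the only thing to watch is the identity $16(2^{k}n+2^{k}-1)+15 = 2^{k+4}n + 2^{k+4}-1$ so that the corollary slots correctly into the inductive scheme used immediately afterwards to prove Corollary \ref{P0}. I would therefore present the proof as three short lines, one per congruence, explicitly citing \eqref{p8n+7}, Theorem \ref{P} with \eqref{p2n+1}, and Theorem \ref{P} with \eqref{p4n+3}.
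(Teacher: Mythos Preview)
Your proof is correct. For the first congruence you do exactly what the paper does, citing \eqref{p8n+7}. For the other two you take a different and slightly slicker route: you apply the recursion of Theorem~\ref{P}, $P^\ast(16m+15)=-64P^\ast(m)$, at $m=2n+1$ and $m=4n+3$ and feed in the congruences \eqref{p2n+1} and \eqref{p4n+3}. The paper instead continues iterating the huffing operator $H_2$ with Lemma~\ref{Lemma 1} two more steps past \eqref{p16n+15} to obtain the explicit generating functions
\[
\sum_{n\ge 0}P^\ast(32n+31)q^{n+1}=256\lambda(q)+512\lambda(q^2),\qquad
\sum_{n\ge 0}P^\ast(64n+63)q^{n+1}=-512\lambda(q)-2048\lambda(q^2),
\]
from which the divisibilities are read off. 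Your argument is shorter and avoids repeating the dissection machinery; the paper's version, on the other hand, yields exact closed forms rather than mere congruences. Either way there is no circularity, since Theorem~\ref{P} and Theorem~\ref{thm 16n+7} are proved independently of this corollary.
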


\begin{proof}
    Equation \eqref{p8n+7} proves the first congruence. The last two can be proved in a similar fashion, where we will obtain the following generating functions:
\begin{align*}
    \sum_{n \geq 0} P^\ast(32n + 31) q^{n+1} &= 256\lambda(q) + 512\lambda(q^2),\\
    \sum_{n \geq 0} P^\ast(64n + 63) q^{n+1} &= -512\lambda(q) - 2048\lambda(q^2).
\end{align*}
\end{proof}

\section{Proof of Theorems \ref{Theorem M} and \ref{Theorem T}}\label{secMT}

\begin{proof}[Proof of Theorem \ref{Theorem M}]
    We prove the theorem using induction on $\alpha$.

From \eqref{gf-m}, we have
\begin{equation*}
    \sum_{n \geq 0} M(n)q^n = \dfrac{f_2^5 f_5^5}{f_1 f_{10}} = \dfrac{f_2^2}{f_1} \cdot f_2^3 \cdot \dfrac{f_5^5}{f_{10}}.
\end{equation*}
Now, using Lemma \ref{lem1} in the equation above, we obtain
\[
\sum_{n \geq 0} M(n)q^n = \left(f(q^{10},q^{15})+qf(q^5,q^{20})+q^3\psi(q^{25})\right)f_{50}^3\left(\frac{1}{R(q^{10})}-q^2-q^4R(q^{10})\right)^3\frac{f_5^5}{f_{10}}.
\]
We now extract terms of the form $q^{5n+4}$ to obtain
\begin{equation*}
    \sum_{n \geq 0} M(5n+4)q^n = 5q \dfrac{f_1^5 f_{10}^5}{f_2 f_5},
\end{equation*}
which corresponds to the case $\alpha = 1$.

Next, applying Lemma \ref{lem1} again into the equation above, we again extract terms of the form $q^{5n+4}$ to obtain
\begin{equation*}
    \sum_{n \geq 0} M(25n+24)q^n = 25  \dfrac{f_2^5 f_5^5}{f_1 f_{10}},
\end{equation*}
which corresponds to the case $\alpha = 2$.

Now, we assume the result holds for $\alpha = k$, that is
\begin{equation*}
    \sum_{n \geq 0} M(5^{k}n + 5^{k} - 1)q^n = 5^{k} \Psi_{k},
\end{equation*}
where
\begin{equation*}
    \Psi_{k} =
    \begin{cases}
        q \dfrac{f_1^5 f_{10}^5}{f_2 f_5}, & \text{if } k \text{ is odd}, \\[10pt]
        \dfrac{f_2^5 f_5^5}{f_1 f_{10}}, & \text{if } k \text{ is even}.
    \end{cases}
\end{equation*}
If $k$ is odd, we proceed as in the case $\alpha = 2$, and if $k$ is even, we proceed as in the case $\alpha = 1$. Therefore, the result also holds for $\alpha = k+1$.

This completes the proof of Theorem~\ref{Theorem M}.
\end{proof}

\begin{proof}[Proof of Theorem \ref{Theorem T}]
   From \eqref{gf-t}, we have
\begin{equation*}
    \sum_{n \geq 0} T^\ast(n) q^n = \dfrac{f_1^5 f_{10}^5}{f_2 f_5} = \dfrac{f_1^2}{f_2}\cdot f_1^3 \cdot \frac{f_{10}^5}{f_5}.
\end{equation*}
Now, using Lemma \ref{lem1} in the above equation like we did in the previous proof, and extracting the terms of the form $q^{5n+3}$, we obtain
\begin{equation*}
    \sum_{n \geq 0} T^\ast(5n+3) q^n = 5\frac{f_2^5 f_5^5}{f_1 f_{10}} = 5\sum_{n \geq 0} M(n) q^n.
\end{equation*}
This completes the proof of Theorem~\ref{Theorem T}.
\end{proof}

\section{Proof of Theorem \ref{Theorem 9n, 10n}}\label{secthm9n}

We need the following result.

\begin{lemma}
    For all $n\geq 0$ and $\alpha\geq 1$, we have
\begin{equation}\label{e17}
  \sum_{n\geq 0}  P^\ast(5^\alpha n+5^\alpha -1)q^n = (-5)^{\alpha}f_1^4f_5^4.
\end{equation}
\end{lemma}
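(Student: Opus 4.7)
The plan is induction on $\alpha$, where the base case $\alpha=1$ carries all the substance and the inductive step is a mechanical reapplication of the same $5$-dissection.

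For the base case, I will $5$-dissect $f_1^4 f_5^4$ using Lemma \ref{lem1}(i). Substituting $f_1 = f_{25}(1/R_5 - q - q^2 R_5)$ gives
\[
f_1^4 f_5^4 = f_5^4 f_{25}^4 \left(\frac{1}{R_5} - q - q^2 R_5\right)^{4}.
\]
The three series $f_5, f_{25}, R_5$ are all power series in $q^5$, so to extract the coefficients of $q^{5n+4}$ I only need to expand the trinomial multinomially and collect the terms whose $q$-degree is $\equiv 4 \pmod 5$. A typical term with exponents $(i,j,k)$ on $(1/R_5,\,-q,\,-q^2 R_5)$ contributes a multinomial coefficient times $q^{j+2k}$ and carries the factor $R_5^{k-i}$; under $i+j+k=4$ and the constraint $j+2k\equiv 4\pmod 5$ (forced to $j+2k=4$ since $j+2k\leq 8$), the admissible triples are $(0,4,0)$, $(1,2,1)$, and $(2,0,2)$, all of which satisfy $k-i=0$. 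Thus $R_5$ drops out entirely, and a quick multinomial tally gives $(1-12+6)q^4 = -5q^4$. Extracting and replacing $q^5 \mapsto q$ then yields $\sum_{n\geq 0}P^\ast(5n+4)q^n = -5 f_1^4 f_5^4$, which is the $\alpha=1$ identity.

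For the inductive step, I invoke the nested index identity $5^{\alpha+1}n + 5^{\alpha+1} - 1 = 5^\alpha(5n+4) + 5^\alpha - 1$. Applying the arithmetic-progression extraction $m \mapsto 5m+4$ to the inductive hypothesis $\sum_m P^\ast(5^\alpha m + 5^\alpha - 1) q^m = (-5)^\alpha f_1^4 f_5^4$ produces $\sum_n P^\ast(5^{\alpha+1}n + 5^{\alpha+1} - 1) q^n$ on the left, while on the right the base-case dissection multiplies $f_1^4 f_5^4$ by $-5$. Combining yields $(-5)^{\alpha+1} f_1^4 f_5^4$, completing the induction.

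The only real obstacle is the base-case combinatorics: one needs the three contributing triples to all have $k-i=0$, so that $R_5$ disappears completely and the residue is an integer constant rather than a genuine series in $q^5$. Without this self-replicating structure the induction could not even begin, since the extracted series would carry residual $R_5$-dependence and break the form of the right-hand side. Once this cancellation is verified, the rest is bookkeeping, and the whole argument is essentially one computation iterated $\alpha$ times.
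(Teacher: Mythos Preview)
Your proof is correct and follows essentially the same approach as the paper: induction on $\alpha$, with the base case handled via the $5$-dissection of $f_1^4$ from Lemma~\ref{lem1}(i), and the inductive step a reapplication of the same extraction. The paper simply asserts the result of the $q^{5n+4}$-extraction, whereas you spell out the multinomial bookkeeping and the crucial $k=i$ cancellation of $R_5$; this added detail is helpful but not a different method.
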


\begin{proof}
We prove the result by induction on $\alpha$. From \eqref{gf-p}, we have
\begin{equation*}
    \sum_{n \geq 0} P^\ast(n) q^n = f_1^4 f_5^4.
\end{equation*}
Now, using Lemma \ref{lem1} \eqref{eq:f1_def} and extracting the terms of the form $q^{5n+4}$, we obtain
\begin{equation*}
    \sum_{n \geq 0} P^\ast(5n+4) q^n = -5 f_1^4 f_5^4,
\end{equation*}
which corresponds to the case $\alpha = 1$.

Next, we assume that the theorem holds for $\alpha = k$, that is, we have
\begin{equation*}
     \sum_{n\geq 0}  P^\ast(5^k n+5^k -1) q^n= (-5)^{k}f_1^4f_5^4.
\end{equation*}
Now, applying Lemma \ref{lem1} \eqref{eq:f1_def} again and extracting the terms of the form $q^{5n+4}$ from the above equation, we obtain
\begin{equation*}
     \sum_{n\geq 0}  P^\ast(5^{k+1} n+5^{k+1} -1) q^n= (-5)^{k+1}f_1^4f_5^4.
\end{equation*}
which corresponds to the case $\alpha = k+1$. Hence, the lemma holds for all $\alpha \geq 1$.
\end{proof}

\begin{proof}[Proof of Theorem \ref{Theorem 9n, 10n}]
Applying the magnification $q \mapsto q^5$ in \eqref{2-d f_1^4}, and substituting it together with \eqref{2-d f_1^4} into \eqref{e17}, we then extract terms of the form $q^{2n+1}$. This yields
\begin{equation*}
    \sum_{n \geq 0} P^\ast\!\left( 2 \cdot 5^{\alpha}n + 2 \cdot 5^{\alpha} - 1 \right) q^n 
    = (-1)^{\alpha+1} \cdot 2^2 \cdot 5^{\alpha} \left( 
        \dfrac{f_1^2 f_4^4 f_{10}^{10}}{f_2^2 f_5^2 f_{20}^4} 
        + q^2 \dfrac{f_2^{10} f_5^2 f_{20}^4}{f_1^2 f_4^4 f_{10}^2} 
    \right).
\end{equation*}
This completes the proof of \eqref{P(10n+9)}.
\end{proof}

\section{Proof of Theorem \ref{abradu}}\label{proof:abradu}

Since the proofs of \eqref{aradu} and \eqref{bradu} are the same, we only mention the details of the proof of \eqref{aradu}. The Mathematica output for both the cases can be found in this URL: \url{https://manjilsaikia.in/publ/mathematica/pv.nb}.

\begin{proof}[Proof of \eqref{aradu}]
Using \eqref{gf-a}, we first run \texttt{minN[14,\{-2,6,6,0\},7,6]} which gives us $N=14$, which is handled easily in a regular laptop. Radu's algorithm now gives a proof of \eqref{aradu}. Here we give the output of \texttt{RK}.
\[
\begin{array}{c|c}
 \text{N:} & 14 \\
\hline
 \text{$\{$M,(}r_{\delta })_{\delta |M}\text{$\}$:} & \{14,\{-2,6,6,0\}\} \\
\hline
 \text{m:} & 7 \\
\hline
 P_{m,r}\text{(j):} & \{6\} \\
\hline
 f_1\text{(q):} & \dfrac{f_2f_7^9}{q^4f_1^7f_{14}^{13}}\\
\hline
 \text{t:} & \dfrac{f_2f_7^7}{q^2f_1f_{14}^7} \\
\hline
 \text{AB:} & \left\{1,\dfrac{f_2^8f_7^4}{q^3f_1^4f_{14}^8}-\dfrac{4f_2f_7^7}{q^2f_1f_{14}^7} \right\} \\
\hline
 \left\{p_g\text{(t): g$\in $AB$\}$}\right. & \{-7 t,0\} \\
\hline
 \text{Common Factor:} & 7 \\
\end{array}
\]

The interpretation of this output is as follows.

The first entry in the procedure call \texttt{RK[14, 14, \{-2, 6, 6, 0\}, 7, 6]} corresponds to specifying $N=14$, which fixes the space of modular functions
\[
M(\Gamma_0(N)):=\text{the algebra of modular functions for $\Gamma_0(N)$}.
\]

 The second and third entry of the procedure call \texttt{RK[14, 14, \{-2, 6, 6, 0\}, 7, 6]} gives the assignment $\{M, (r_\delta)_{\delta|M}\}=\{14, (-2, 6, 6,0)\}$, which corresponds to specifying $(r_\delta)_{\delta|M}=(r_1,r_2,r_7,r_{14})=(-2,6,6,0)$, so that
 \[
\sum_{n\geq 0}A(n)q^n=\prod_{\delta|M}(q^\delta;q^\delta)^{r_\delta}_\infty = \frac{f_2^6f_7^6}{f_1^2}.
 \]

 The last two entries of the procedure call \texttt{RK[14, 14, \{-2, 6, 6, 0\}, 7, 6]} corresponds to the assignment $m=7$ and $j=6$, which means that we want the generating function
 \[
\sum_{n\geq 0}A(mn+j)q^n=\sum_{n\geq 0}A(8n+7)q^n.
 \]
 So, $P_{m,r}(j)=P_{7,r}(6)$ with $r=(-2,6,6,0)$.

The output $P_{m,r}(j):=P_{7,(-2,6,6,0)}(6)=\{6\}$ means that there exists an infinite product
\[
f_1(q)=\dfrac{f_2f_7^9}{q^4f_1^7f_{14}^{13}},
\]
such that
\[
f_1(q)\sum_{n\geq 0}A(8n+7)q^n\in M(\Gamma_0(14)).
\]

Finally, the output
\[
t=\dfrac{f_2f_7^7}{q^2f_1f_{14}^7}, \quad AB=\left\{1, \dfrac{f_2^8f_7^4}{q^3f_1^4f_{14}^8}-\dfrac{4f_2f_7^7}{q^2f_1f_{14}^7}\right\}, \quad \text{and}\quad \{p_g\text{(t): g$\in AB$\}},
\]
presents a solution to the question of finding a modular function $t\in M(\Gamma_0(14))$ and polynomials $p_g(t)$ such that
\[
f_1(q)\sum_{n\geq 0}A(7n+6)q^n =\sum_{g\in AB}p_g(t)\cdot g.
\]
In this specific case, we see that entries in the set $\{p_g\text{(t): g$\in AB$\}}$ has the common factor $7$, thus proving equation \eqref{aradu}.
\end{proof}

\noindent The interested reader can refer to \cite{Saikia}, \cite{AndrewsPaule2}, or \cite{SAIKIA_SARMA_2025} for more explanation of the method and how to read the output.

\section{Proof of Theorem \ref{thmconjd}}\label{sec:conjd}

Using the Mathematica implementation of Radu's algorithm, we find the following generating functions
\begin{align}
    \sum_{n \geq 0} L(7n+6)q^n & = -7\frac{f_1^5f_7^5}{f_2f_{14}} = -7\sum_{n \geq 0} L(n)q^n,\label{L}\\
    \sum_{n \geq 0} K(7n+5)q^n & = -7qf_1^2f_2^2f_7^2f_{14}^2 = -7q\sum_{n \geq 0} K(n)q^n\label{K}.
\end{align}
The relevant output is available in \url{https://manjilsaikia.in/publ/mathematica/kl.nb}, and the procedure to read the output is same as explained in the previous section, so we do not repeat it here.

By iterating the substitution $n \to 7n+6$ in \eqref{L}, we obtain the generating function for $\alpha \geq 1$ as
\begin{equation}\label{L1}
    \sum_{n \geq 0} L(7^{\alpha}n+7^{\alpha}-1)q^n = (-7)^{\alpha}\frac{f_1^5f_7^5}{f_2f_{14}}.
\end{equation}
From this, it follows immediately that
\begin{equation*}
    L(7^{\alpha}n+7^{\alpha}-1) \equiv 0 \pmod{7^{\alpha}},
\end{equation*}
thereby proving \eqref{eqconjd}. The proof of \eqref{L1} can be carried out by induction on $\alpha$, and is therefore omitted.

Similarly, by iterating the substitution $n \to 7n+5$ in \eqref{K}, we obtain the generating function for $\alpha \geq 1$ as
\begin{equation}\label{K1}
    \sum_{n \geq 0} K(7^{\alpha}n+7^{\alpha}-2)q^n = (-7)^{\alpha}q^{\alpha}f_1^2f_2^2f_7^2f_{14}^2.
\end{equation}
From this, it follows immediately that
\begin{equation*}
    K(7^{\alpha}n+7^{\alpha}-2) \equiv 0 \pmod{7^{\alpha}},
\end{equation*}
thereby proving \eqref{eqconjdd}. The proof of \eqref{K1} can also be carried out by induction on $\alpha$, and is therefore omitted.

\section{Concluding Remarks}\label{sec:conc}

\begin{enumerate}
\item Corollary \ref{cor 8n+7} suggests that Corollary \ref{P0} can be strengthened for some cases. We leave this investigation to the interested reader.
\item It is desirable to have completely elementary proofs of Theorems \ref{abradu} and \ref{thmconjd}.
\item The following result is also true, however we skip the proof for the sake of brevity.
\begin{theorem}
    For all $n\geq 0$, we have
    \begin{align}
            P^\ast(9n+2)&\equiv 0\pmod 2,\label{P(9n+2)}\\ 
    P^\ast(9n+5)&\equiv 0\pmod 2,\label{P(9n+5)}\\ 
    P^\ast(10n+1)&\equiv 0\pmod 4,\label{P(10n+1)}\\ 
    P^\ast(10n+7)&\equiv 0\pmod 4,\label{P(10n+7)}\\ 
    P^\ast(10n+3)&\equiv 0\pmod 8,\label{P(10n+3)}\\ 
    P^\ast(10n+5)&\equiv 0\pmod 8.\label{P(10n+5)}
    \end{align}
\end{theorem}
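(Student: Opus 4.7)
My plan is to split the six congruences by modulus and handle them in order of increasing difficulty. The mod-$4$ statements \eqref{P(10n+1)} and \eqref{P(10n+7)} are immediate from \eqref{p2n+1}, since $10n+1$ and $10n+7$ are both odd and $P^\ast(2m+1) \equiv 0 \pmod{4}$ holds for every $m \geq 0$. Similarly, two of the four subcases of the mod-$8$ statements follow from \eqref{p4n+3}: the subcase $n$ even in \eqref{P(10n+3)} and the subcase $n$ odd in \eqref{P(10n+5)} both yield indices $\equiv 3 \pmod{4}$.

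For the two remaining mod-$8$ subcases (namely, $n$ odd in \eqref{P(10n+3)} and $n$ even in \eqref{P(10n+5)}), I would work from the 2-dissection
\[
\sum_{n \geq 0} P^\ast(2n+1)\,q^n = -4\left(\frac{f_1^2 f_4^4 f_{10}^{10}}{f_2^2 f_5^2 f_{20}^4} + q^2\,\frac{f_2^{10} f_5^2 f_{20}^4}{f_1^2 f_4^4 f_{10}^2}\right)
\]
established in Section \ref{secthm16}. Writing $P^\ast(10n+3) = P^\ast(2(5n+1)+1)$ and $P^\ast(10n+5) = P^\ast(2(5n+2)+1)$, the desired mod-$8$ divisibility reduces to showing that the bracketed expression has even coefficients at powers of $q$ whose exponents lie in residue classes $6$ and $2$ modulo $10$. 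I would achieve this by performing a 10-dissection of the bracket, combining the 5-dissections of Lemma \ref{lem1} with the 2-dissection identities \eqref{2-d f_1^4}, \eqref{2-d f_5/f_1}, and \eqref{2-d f_1/f_5}, then isolating the two residue classes and reducing modulo $2$.

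For the mod-$2$ statements \eqref{P(9n+2)} and \eqref{P(9n+5)}, the key observation is that $f_k^2 \equiv f_{2k} \pmod{2}$ yields $f_1^4 f_5^4 \equiv f_4 f_{20} \pmod{2}$; consequently $P^\ast(n) \equiv 0 \pmod{2}$ whenever $4 \nmid n$, and this automatically handles three of the four residues of $n \pmod{4}$ in each congruence. The remaining nontrivial subcases are $n = 4m+2$ for \eqref{P(9n+2)} (so that $9n+2 = 4(9m+5)$) and $n = 4m+3$ for \eqref{P(9n+5)} (so that $9n+5 = 4(9m+8)$); both reduce to showing
\[
[q^{9m+5}](f_1 f_5) \equiv 0 \pmod{2} \quad \text{and} \quad [q^{9m+8}](f_1 f_5) \equiv 0 \pmod{2}
\]
for all $m \geq 0$. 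I would try to prove this by applying the pentagonal number theorem to both $f_1$ and $f_5$ to obtain their 3-dissections, multiplying out and then performing a further 3-dissection of each resulting $q^3$-subseries to produce an explicit 9-dissection of $f_1 f_5$, and checking that the components in classes $5$ and $8$ modulo $9$ collapse to zero modulo $2$ through cross-term cancellation. The hard part is this 9-dissection bookkeeping, which is noticeably more delicate than the 2- and 5-dissections used elsewhere in the paper; should an elementary route prove too intricate, the two mod-$2$ congruences can instead be verified algorithmically via Smoot's \texttt{RaduRK} package, as in Theorem \ref{abradu}.
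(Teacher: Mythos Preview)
The paper explicitly \emph{omits} the proof of this theorem (``we skip the proof for the sake of brevity''), so there is no argument to compare against; I can only assess your plan on its own terms and against the tools the paper has already set up.

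Your reductions are correct. The congruences \eqref{P(10n+1)} and \eqref{P(10n+7)} are indeed immediate from \eqref{p2n+1}, and the ``easy'' halves of \eqref{P(10n+3)} and \eqref{P(10n+5)} fall to \eqref{p4n+3} exactly as you say. For the remaining mod-$8$ subcases, your $10$-dissection of the bracket would work, but it is considerably more effort than necessary. A route much closer to what the paper has already built is this: equation \eqref{e19} gives
\[
\sum_{n\geq 0}P^\ast(2n+1)q^{n+1}=-4\lambda(q)-8\lambda(q^2),
\]
so $P^\ast(2N+1)\equiv 4P^\ast(N)\pmod 8$ for every $N$. Hence \eqref{P(10n+3)} and \eqref{P(10n+5)} reduce to $P^\ast(5n+1)\equiv P^\ast(5n+2)\equiv 0\pmod 2$. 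Your own observation $f_1^4f_5^4\equiv f_4f_{20}\pmod 2$ shows $P^\ast(m)$ is even unless $4\mid m$, and when $4\mid m$ one has $P^\ast(4\ell)\equiv[q^\ell]f_1f_5\pmod 2$. The only surviving cases are $\ell\equiv 3,4\pmod 5$, and these vanish \emph{identically} because Lemma~\ref{lem1}\,(i) forces every exponent in $f_1$ (hence in $f_1f_5$) to be $\equiv 0,1,2\pmod 5$. This disposes of both mod-$8$ congruences without any $10$-dissection bookkeeping.

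For \eqref{P(9n+2)} and \eqref{P(9n+5)} your reduction to the parity of $[q^{9m+5}]f_1f_5$ and $[q^{9m+8}]f_1f_5$ is correct, and you are right that this is the genuinely delicate part: unlike the mod-$5$ situation above, these coefficients are \emph{not} identically zero, so one really must track cancellation modulo $2$. A $9$-dissection via the pentagonal-number expansion is a plausible line, but the bookkeeping is nontrivial and you have not indicated the mechanism by which the terms pair off; absent that, the \texttt{RaduRK} fallback you mention is the safest way to certify these two, and is fully in keeping with how the paper handles Theorems~\ref{abradu} and~\ref{thmconjd}.
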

\end{enumerate}

\section*{Declarations}

\begin{itemize}
\item Funding: The work of the second author is partially supported by an Ahmedabad University Start-Up Grant (Reference No. URBSASI24A5).
\item Conflict of interest/Competing interests: The authors declare no conflicts of interests or competing interests.
\item Ethics approval and consent to participate: Not applicable.
\item Consent for publication: Not applicable.
\item Data availability: Not applicable.
\item Materials availability: Not applicable.
\item Code availability: Made available via URLs linked in the text.
\item Author contribution: All authors contributed equally.
\end{itemize}

\end{document}